\newcommand \be {\begin{equation}}
\newcommand \ee {\end{equation}}
\newcommand \loc {\text{loc}} 
\newcommand \del {\partial} 
\newcommand \eps \epsilon 
\newcommand \Ccal {\mathcal C} 
\def\normal   {\vec n}
\def\flow    {hyperbolic mean curvature flow}  
\newcommand{\nrg}{\textsl e}
\def\vel      {{\sigma}}
\def\tvel     {{S}}
\def\acc      {{\alpha}}
\def\tacc     {{A}}
\def\disp     {\displaystyle}
\def\dtx      #1{\frac{{d}\xi^{#1}}{{d}t}}
\def\real     #1{{\mathbb R^{#1}}}
\def\natural  #1{{\mathbb N^{#1}}}
\def\R     {\mathbb R}
\def\dd       #1#2#3{{#1}_{#2#3}}
\def\dddd     #1#2#3#4#5{{#1}_{#2#3#4#5}}
\def\uu       #1#2#3{{#1}^{#2#3}}
\def\lap      {\Delta }
\def\dt       {{\frac{d}{dt}}}
\def\pt       {{\frac{\partial}{\partial t}}}
\def\ds       {{\frac{d}{ds}}}
\def\ddt      {{\frac{d^2}{dt^2}}}
\def\dn       #1{\nabla_{\hspace{-2pt}#1}\,} 
\newtheorem{theorem}{Theorem}[section]   
\newtheorem{lemma}[theorem]{Lemma}   
\newtheorem{corollary}[theorem]{Corollary}   
\newtheorem{proposition}[theorem]{Proposition}   
\newtheorem{remark}[theorem]{Remark}   
\theoremstyle{definition}   
\newtheorem{definition}[theorem]{Definition}   
\numberwithin{equation}{section}   
\newcommand{\bfig}{\begin{figure}}
\newcommand{\efig}{\end{figure}}
\begin{document}
\title
[The hyperbolic mean curvature flow]
{The hyperbolic mean curvature flow}  

\author[P.G. L{\tiny e}F{\tiny loch} and K. S{\tiny moczyk}]{\sc P{\smaller hilippe} G. L{\smaller e}F{\smaller loch} 
and K{\smaller nut} S{\smaller moczyk}}
\address{
Philippe G. LeFloch : Laboratoire Jacques-Louis Lions \& 
Centre National de la Recherche Scientifique, Universit\'e de Paris 6, 
4 Place Jussieu, 75252 Paris, France. 
\newline 
\indent {\it E-mail address :} {\tt LeFloch@ann.jussieu.fr.} 
\newline 
\newline 
\indent Knut Smoczyk :  Institut f\"ur Differentialgeometrie,  Leibniz Universit\"at Hannover, 
Welfengarten 1, 30167 Hannover, Germany.
\newline 
\indent {\it E-mail address :} {\tt Smoczyk@math.uni-hannover.de}
}
\begin{abstract}
We introduce a geometric evolution equation of hyperbolic type, which 
governs the evolution of a hypersurface moving in the direction of its mean curvature vector. 
The flow stems from a geometrically natural action containing kinetic and internal energy terms. 
As the mean curvature of the hypersurface is the main driving factor, 
we refer to this model as the {\sl hyperbolic mean curvature flow} (HMCF).   
The case that the initial velocity field is normal to the hypersurface is of particular interest:
this property is preserved during the evolution and gives rise to a comparatively 
simpler evolution equation. We also consider the case where the manifold can be viewed as a graph 
over a fixed manifold. Our main results are as follows. 
First, we derive several balance laws satisfied by the hypersurface during the evolution. 
Second, we establish that the initial-value problem is locally well-posed in Sobolev spaces;  
this is achieved by exhibiting a convexity property satisfied by the energy density which is naturally associated with the flow. Third, we provide some criteria ensuring that the flow will blow-up in finite time. 
Fourth, in the case of graphs, we introduce a concept of weak solutions 
suitably restricted by an entropy inequality, and we prove that a classical solution is unique in the 
larger class of entropy solutions. In the special case of one-dimensional graphs, a global-in-time existence result is established.  
\end{abstract}

\renewcommand{\subjclassname}{
  \textup{2000} Mathematics Subject Classification}  
\subjclass[2000]{Primary 53C44, 35L70.}   
\keywords{Mean curvature flow, hyperbolic equation, conservation law, local well-posedness, blow-up.}
\date{\today}   
\maketitle

%\tableofcontents

%==================================================================================================

\section{Introduction}
\label{S-intro}

Our aim in this paper is to introduce and study a geometric 
evolution equation of hyperbolic type which describes the flow  
$$
F:[0,T)\times M\to\real{n+1},\quad T>0 
$$
of an immersed $n$-dimensional hypersurface $M$ in the Euclidean space. 
 We derive this evolution equation  
from a geometrically natural action functional based on the local energy density
$$
\nrg := {1 \over 2} \Big(\Big|{d \over dt} F \Big|^2 + n \Big), 
$$
involving the kinetic energy of the hypersurface and the internal energy 
associated with its volume. The equation under consideration models the nonlinear motion of an elastic membrane, 	
driven by its surface tension only. Our model is purely geometric and
requires no constitutive equation on the membrane material 
(contrary to what is required in the theory of nonlinear elastic bodies or shells). 
As the mean curvature of the hypersurface is the main driving factor, 
we refer to this model as the {\sl hyperbolic mean curvature flow (HMCF);}
see Proposition~\ref{eq of motion} below.    
Stationary solutions of this flow will be minimal hypersurfaces with vanishing
kinetic energy.

The flow equation takes a simpler form in the case that the initial velocity 
is normal to the hypersurface, i.e. if its tangential part vanishes: 
\be
\label{norm}
\left(\frac{dF}{dt}\right)^{\top}_{\big\vert t=0}=0.
\ee
Namely,  
from the momentum conservation law satisfied by a general flow, we will   
deduce that tangential components of the velocity vector vanish for all times if they vanish initially. 
Hence, under this assumption, the (normalized version) of the proposed HMCF equation reads  
\be
\tag{HMCF'}
\aligned 
& \frac{d^2F}{dt^2}=\nrg \,H\nu-\nabla\nrg,
\\
& \left(\frac{dF}{dt}\right)^{\top}_{\big\vert t=0}=0.
\endaligned 
\ee
where the scalar $H$ is the mean curvature of the hypersurface and the vector $\nu$ denotes its 
unit normal (chosen to be inward pointing when $M$ is compact without boundary). 
In fact, the assumption (\ref{norm}) is geometrically motivated in the sense that tangential
variations do not alter the shape of the hypersurface and merely correspond to reparametrizations
by a suitably chosen family of (time-dependent) diffeomorphisms. 
Since, geometrically, only (HMCF') is of interest, we will mainly study this flow, which  
we refer to as the normal mean curvature flow equation. 

The main results established in the present paper are as follows. 
After introducing the proposed flow in Sections~\ref{S-struct} and \ref{S-HMCF}, 
we derive in Section~\ref{S-cons}
several conservation laws or balance laws satisfied by the hyperbolic mean curvature flow. 
Then, in Section~\ref{S-graphs}, we begin our investigation of the properties satisfied by general solutions to the 
hyperbolic flow by 
restricting attention to the important case that the hypersurface is represented as 
an entire graph over $\real{n}$: we prove the local well-posedness of the flow equation, and introduce a concept of 
weak solutions suitably restricted by an entropy inequality; 
we also prove the uniqueness of a classical solution within the class of weak solutions, 
and for one-dimensional graphs we establish the global-in-time existence of weak solutions with bounded variation. The convexity of the 
measure $e \, d\mu$ with respect to certain well-chosen variables is an essential observation for these results. 
Then, for the rest of the paper we turn to the equation (HMCF') for normal flows and, 
in Section~\ref{S-local}, we prove that the equations under consideration can be recast in the form of 
a first-order nonlinear hyperbolic system, and we obtain a local-in-time existence result for the evolution 
of general compact manifolds. Next, in Section~\ref{S-blow-up},
we provide some criteria ensuring that the flow will blow-up in finite time, due to the formation of 
geometric singularities or shock waves. 
For general material on flows by mean curvature we may refer to \cite{brendle,huisken}, and on
 nonlinear wave equations to \cite{hoermander,shatahstruwe,sogge}.

%===========================================================================================

\section{Structure equations for general flows}
\label{S-struct}

Let $F:M \to\real{n+1}$ be a smooth immersion of
an orientable smooth manifold $M$ of dimension $n$ into $\real{n+1}$, and 
let $\nu$ be the unit normal vector defined along the hypersurface and chosen 
to be inward pointing when the manifold is compact without boundary. 
In local coordinates $(x^i)_{i=1,\dots, n}$, we have 
$$
F_i:=\nabla_i F:=dF\left(\frac{\partial}{\partial x^i}\right)
=\frac{\partial F}{\partial x^i}
$$
and 
$$
\nu = {F_1 \wedge \ldots \wedge F_n \over |F_1 \wedge \ldots \wedge F_n|}. 
$$
The induced metric $g=\dd gij\, dx^i\otimes dx^j$ and the second fundamental 
form $h=\dd hij\,dx^i\otimes dx^j$ of the hypersurface are 
$$
\dd gij=\langle F_i,F_j\rangle, 
\qquad 
\dd hij=-\langle F_i,\dn j\nu\rangle,
$$
respectively. Here, $\nabla$ denotes the Levi-Civita connection associated with $g$.  
Throughout, we use Einstein's summation convention on repeated indices 
and, for simplicity, we keep the same notation $\langle\cdot,\cdot\rangle$ 
for both the standard inner product on $\real{n+1}$ and the induced inner product on $M$.   
Latin indices are raised with the inverse $(\uu gij)$ of the metric $(\dd gij)$ so, for instance, 
$$
h_i^j : = h_{ik} g^{kj}.  
$$

We denote by $R_{ijkl}$ the components of the Riemann curvature tensor of the 
hypersurface in local coordinates.
We denote the induced volume form on $M$ by $d\mu$ and the (scalar) mean curvature by $H := \uu gij\dd hij$.
We will use also the following convention: we identify the gradient $\nabla p$ of a function
$p$ on $M$ with its image $dF(\nabla p)=\nabla^ip\, F_i$. 

The following basic properties of these tensor fields are easily checked from their definitions. 

\begin{lemma} 
\label{Gauss}
The Gauss-Weingarten-Codazzi equations of the hypersurface $M$ 
read: 
\begin{eqnarray}
&& \nabla_iF_j = \dd hij\nu,
\label{rel 1.1}
\\
&& \dn i\nu = -h_i^jF_j,
\label{rel 1.2}
\\
&& \dn i\dd hjk = \dn j\dd hik,
\label{rel 1.3}
\\
&& \dddd Rijkl = \dd hik\dd hjl-\dd hil\dd hjk.
\label{rel 1.4}
\end{eqnarray} 
\end{lemma}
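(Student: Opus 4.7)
The plan is to establish each of the four identities (\ref{rel 1.1})--(\ref{rel 1.4}) by decomposing ambient derivatives in $\real{n+1}$ into their tangential and normal components with respect to the moving frame $\{F_1,\dots,F_n,\nu\}$. I proceed in the order stated so that each identity feeds into the next.

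For (\ref{rel 1.1}), I read $\nabla_i F_j := \partial_i\partial_j F - \Gamma_{ij}^k F_k$, viewing $dF$ as a section of $T^*M \otimes F^*T\real{n+1}$ equipped with the Levi--Civita connection on $T^*M$ and the flat connection inherited from the ambient space. Since mixed partials commute in $\real{n+1}$ and $\Gamma$ is torsion-free, the resulting expression is symmetric in $(i,j)$. I then identify it by pairing with each frame vector: differentiating the identity $\langle F_j,\nu\rangle=0$ yields $\langle\partial_i F_j,\nu\rangle=-\langle F_j,\partial_i\nu\rangle=h_{ij}$, while the tangential pairings $\langle\nabla_i F_j,F_m\rangle$ vanish once $\Gamma_{ij}^k g_{km}$ is rewritten via the Koszul formula from the derivatives of $g_{ij}=\langle F_i,F_j\rangle$.

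For (\ref{rel 1.2}), differentiating $|\nu|^2=1$ shows that $\partial_i\nu$ is tangent; writing $\partial_i\nu = a_i^k F_k$ and using $\langle\partial_i\nu,F_j\rangle=-h_{ij}$ (from the definition of $h$) immediately gives $a_i^k=-h_i^k$. For (\ref{rel 1.3}), I exploit the triple-symmetry $\partial_i\partial_j\partial_k F = \partial_j\partial_i\partial_k F$ available in the flat ambient space: expanding both sides by substituting (\ref{rel 1.1}) and (\ref{rel 1.2}) and then projecting onto $\nu$, the ambient equality collapses to $\nabla_i h_{jk}=\nabla_j h_{ik}$ after the Christoffel contributions reorganize symmetrically.

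For (\ref{rel 1.4}), differentiating (\ref{rel 1.1}) once more and using (\ref{rel 1.2}) on the resulting $\nabla_j\nu$ term yields
\be
\nabla_j\nabla_i F_k = (\nabla_j h_{ik})\,\nu \;-\; h_{ik}\,h_j^l\,F_l.
\ee
Antisymmetrizing in $(i,j)$, the normal parts cancel by Codazzi (\ref{rel 1.3}), so the commutator $[\nabla_j,\nabla_i]F_k$ is purely tangential and equals $(h_{ik}h_j^l-h_{jk}h_i^l)F_l$. Matching this with the Ricci identity, which expresses the same commutator in terms of the Riemann tensor of $M$, and lowering the free index, gives (\ref{rel 1.4}). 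The main obstacle is bookkeeping rather than depth: I must keep consistent the sign conventions for $h_{ij}$ (encoded in the chosen orientation of $\nu$) and for $R_{ijkl}$, and carefully distinguish the ambient flat connection from the induced Levi--Civita connection on $M$. Once those conventions are pinned down, each of the four identities reduces to a short computation with inner products, the Koszul formula for $\Gamma$, and the product rule.
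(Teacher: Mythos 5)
Your derivation is correct and is the standard one; the paper itself offers no proof of this lemma, merely asserting that the identities are ``easily checked from their definitions,'' so your argument simply supplies the omitted computation (decompose ambient second derivatives of $F$ into the frame $\{F_1,\dots,F_n,\nu\}$, identify the normal and tangential parts, then read off Codazzi and Gauss from the normal and tangential components of the commutator). The only point to watch is a sign in your commutator display: with $[\nabla_j,\nabla_i]F_k=\nabla_j\nabla_i F_k-\nabla_i\nabla_j F_k$ your own formula yields $(h_{jk}h_i^l-h_{ik}h_j^l)F_l$ rather than the expression you wrote, but this is precisely the convention bookkeeping you already flag, and the final identity \eqref{rel 1.4} comes out as stated.
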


In the present paper, we are interested in a {\sl flow of hypersurfaces,} that is, a smooth family of immersions 
$$
F:[0,T)\times M\to \real{n+1},
$$  
so that all of the tensor fields defined above also depend on the time variable $t$. 
We can then define on $M$ some additional (time-dependent) functions $\vel,\acc$ and $1$-form fields  
$\tvel=\tvel_idx^i,\tacc=\tacc_idx^i$ by
$$ 
\aligned 
& \vel := \left\langle\dt F,\nu\right\rangle,
\qquad 
&&  \tvel_i := \left\langle\dt F,F_i \right\rangle,
\\
& \acc := \left\langle \ddt F,\nu \right\rangle,
\qquad 
&& \tacc_i := \left\langle\ddt F,F_i \right\rangle.
\endaligned
$$
We refer to $\vel$ and $\tvel$ as the {\sl normal} and {\sl tangential velocity} components,
respectively, 
and to $\acc$ and $\tacc$ as the {\sl normal} and {\sl tangential acceleration} components, respectively. 
We have the decomposition 
$$
\dt F=\vel \, \nu+\tvel^iF_i,
\qquad
\ddt\, F=\acc \, \nu+\tacc^iF_i.
$$
 
To express the structure equations satisfied by a general flow, 
it is convenient to introduce the {\sl local energy density} of the hypersurface 
\be 
\label{local energy}
\aligned 
\nrg 
:= & \frac{1}{2}\left(\left|\dt F\right|^2 + \, \big| \nabla F \big|^2 \right)
\\
= & \frac{1}{2} \left(\left|\dt F\right|^2 + n \right),
\endaligned 
\ee 
where we have used $\big| \nabla F \big|^2 = g^{ij}\langle \nabla_iF, \nabla_j F\rangle = g^{ij} g_{ij} = n$.  
A simple computation based on Lemma~\ref{Gauss} then yields the following expressions for the 
components of the velocity and acceleration fields.

\begin{lemma}
\label{lemma 2.2}
Every flow $F$ satisfies the following structure equations:
\begin{eqnarray}
&& \dn i\vel = \left\langle\dt F_i,\nu\right\rangle - h_i^j \tvel_j,
\label{rel 1}
\\
&& \nabla_i\tvel_j = \left\langle\dt F_i,F_j\right\rangle+\vel\dd hij,
\label{rel 2}
\\
&& \acc = \dt\,\vel+\left\langle\nabla\vel,\tvel\right\rangle+h(\tvel,\tvel),
\label{rel 3}
\\
&& \tacc = \dt\,\tvel-d\nrg,
\label{rel 4}
\end{eqnarray}
where $d\nrg = \nabla_j\nrg\, dx^j$ denotes the exterior differential of the function $e$, dual to the gradient
$\nabla\nrg=\nabla^i\nrg\, F_i$.
\end{lemma}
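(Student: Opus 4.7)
The plan is to prove each of the four identities directly from the definitions of the velocity and acceleration components by differentiating and invoking the Gauss--Weingarten relations \eqref{rel 1.1}--\eqref{rel 1.2} of Lemma~\ref{Gauss}. All four computations are short; the only genuinely non-trivial step is identifying $\partial_t\nu$ for (\ref{rel 3}).

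For \eqref{rel 1}, I would differentiate $\vel=\langle\partial_t F,\nu\rangle$ along $\partial/\partial x^i$ (which, acting on the $\mathbb{R}^{n+1}$-valued function $\partial_t F$, agrees with the covariant derivative $\nabla_i$). Commuting the two partial derivatives gives $\partial_i\partial_t F=\partial_t F_i$, and then $\nabla_i\nu=-h_i^jF_j$ together with the decomposition $\partial_t F=\vel\nu+\tvel^kF_k$ produces the tangential term $-h_i^j\tvel_j$.

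For \eqref{rel 2} one works with the tensorial derivative $\nabla_i\tvel_j=\partial_i\tvel_j-\Gamma_{ij}^k\tvel_k$. Writing $\partial_i F_j=\nabla_iF_j+\Gamma_{ij}^kF_k=\dd hij\nu+\Gamma_{ij}^kF_k$ via \eqref{rel 1.1}, the Christoffel contribution exactly cancels the one subtracted by the covariant derivative on $\tvel$, leaving $\sigma\dd hij$ plus $\langle\partial_t F_i,F_j\rangle$.

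For \eqref{rel 3}, differentiating $\vel=\langle\partial_t F,\nu\rangle$ in $t$ yields $\partial_t\vel=\langle\partial_t^2F,\nu\rangle+\langle\partial_t F,\partial_t\nu\rangle=\acc+\langle\partial_t F,\partial_t\nu\rangle$. Since $|\nu|=1$ and $\langle\nu,F_i\rangle=0$, the vector $\partial_t\nu$ is tangential with coefficients
$$\langle\partial_t\nu,F_i\rangle=-\langle\nu,\partial_t F_i\rangle=-\nabla_i\vel-h_{ij}\tvel^j$$
by \eqref{rel 1} (this is the only somewhat delicate point). Hence $\partial_t\nu=-(\nabla^j\vel+h^{jk}\tvel_k)F_j$, and pairing with $\partial_t F=\vel\nu+\tvel^iF_i$ gives $\langle\partial_t F,\partial_t\nu\rangle=-\langle\nabla\vel,\tvel\rangle-h(\tvel,\tvel)$, which rearranges to \eqref{rel 3}.

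Finally for \eqref{rel 4}, differentiate $\tvel_j=\langle\partial_t F,F_j\rangle$ in $t$ to obtain $\partial_t\tvel_j=\tacc_j+\langle\partial_t F,\partial_t F_j\rangle$. Since $\partial_t F_j=\partial_j\partial_t F$, the last term equals $\tfrac12\partial_j|\partial_t F|^2=\partial_j\nrg$, using the definition \eqref{local energy} and the fact that $n$ is constant. This gives $\tacc_j=\partial_t\tvel_j-\partial_j\nrg$, i.e.\ $\tacc=\partial_t\tvel-d\nrg$. The main obstacle, as noted, is computing $\partial_t\nu$ in step (\ref{rel 3}); once \eqref{rel 1} is used for its tangential components, the remaining manipulations are straightforward.
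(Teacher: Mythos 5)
Your proof is correct, and it is precisely the ``simple computation based on Lemma~\ref{Gauss}'' that the paper invokes without writing out: each identity follows by differentiating the defining inner products and substituting the Gauss--Weingarten relations, with the tangential decomposition $\partial_t\nu=-(\nabla^i\vel+h^{ik}\tvel_k)F_i$ obtained from \eqref{rel 1} exactly as you describe. All signs and index placements check out against the paper's conventions, so there is nothing to add.
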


The following notion will be of special interest in this paper.

\begin{definition}
A flow  
$F:[0,T)\times M\to \real{n+1}$ is called a {\sl normal flow} if and only if 
its tangential velocity vanishes identically, that is, $\tvel(t) \equiv 0$ for all 
$t\in[0,T)$. 
\end{definition}

\begin{proposition}~

\begin{enumerate}[1.]
\item
A flow $F$ is normal if and only if its tangential velocity and tangential acceleration 
satisfy
$$
\aligned 
& \tvel(0) = 0 \quad \text{ at the initial time,}
\\ 
& \tacc = -\,d\nrg \quad \text{ at all times.} 
\endaligned 
$$ 

\item
Given a general flow  
$F:[0,T)\times M\to \real{n+1}$,  
there always exists a smooth family of time-dependent diffeomorphisms $\Psi_t: M \to M$
such that the modified flow given by
$$
\aligned 
& \widetilde F : [0,T)\times M \to\real{n+1},
\qquad 
\widetilde F(t,x) := F(t,\Psi_t(x)) 
\endaligned 
$$ 
is a normal flow. In particular, the hypersurfaces 
$$
M_t:=F(t,M), \qquad 
\widetilde M_t:=\widetilde F(t,M)
$$
coincide for each 
$t\in[0,T)$.
\end{enumerate}
\end{proposition}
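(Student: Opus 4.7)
For part~(1), I would argue directly from the structure equation~\eqref{rel 4}, which reads $\tacc = \dt\tvel - d\nrg$. If the flow is normal then $\tvel\equiv 0$, and both conclusions follow at once. Conversely, the two hypotheses $\tvel(0)=0$ and $\tacc = -d\nrg$ combined with \eqref{rel 4} force $\dt\tvel\equiv 0$, whence $\tvel(t)\equiv\tvel(0)=0$, i.e.\ the flow is normal.

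For part~(2), the plan is to kill the tangential velocity by a suitable time-dependent reparametrization of the domain manifold $M$. Concretely, I introduce on $M$ the time-dependent vector field
\[
X_t := -\tvel^i(t,\cdot)\,\frac{\partial}{\partial x^i}
\]
(i.e.\ the $g$-dual of $-\tvel$, hence a globally well-defined smooth vector field), and I let $\Psi_t:M\to M$ denote its integral flow normalized by $\Psi_0=\mathrm{id}_M$. On a compact $M$ without boundary, standard ODE theory on manifolds produces a smooth one-parameter family of diffeomorphisms $\Psi_t$ defined on the whole interval $[0,T)$, which is the case of primary interest in the paper.

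Setting $\widetilde F(t,x):=F(t,\Psi_t(x))$, the chain rule together with the defining ODE for $\Psi_t$ gives
\[
\dt\widetilde F(t,x) = (\dt F)(t,\Psi_t(x)) + dF\bigl(X_t(\Psi_t(x))\bigr) = \vel\,\nu + \tvel^i F_i - \tvel^i F_i = \vel\,\nu,
\]
where all quantities on the right are evaluated at $(t,\Psi_t(x))$. Since the tangent space of $\widetilde M_t$ at $\widetilde F(t,x)$ coincides with that of $M_t$ at the same point, this identity shows that the tangential velocity of $\widetilde F$ vanishes identically, so $\widetilde F$ is a normal flow. The fact that $\Psi_t$ is a diffeomorphism of $M$ then yields $\widetilde F(t,M) = F(t,\Psi_t(M)) = F(t,M) = M_t$, giving $\widetilde M_t = M_t$ for every $t\in[0,T)$.

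The only non-routine step is producing the family $\Psi_t$ on the entire interval $[0,T)$; on a compact manifold without boundary this is immediate from the standard global existence theorem for ODEs, and once it is granted the rest of the argument reduces to the single chain-rule computation above together with the bookkeeping provided by Lemma~\ref{lemma 2.2}. In a non-compact setting one would have to impose suitable completeness or decay on $\tvel$ in order to extend the flow, but since the subsequent analysis focuses on compact hypersurfaces I would not pursue this point further.
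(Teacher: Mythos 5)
Your proposal is correct and follows essentially the same route as the paper: part~(1) is read off directly from the structure equation \eqref{rel 4}, and part~(2) solves the ODE $\dt\Psi_t = -S(t,\Psi_t)$ to absorb the tangential velocity into a reparametrization, with the same chain-rule computation showing $\dt\widetilde F = \vel\,\nu$. Your extra remarks on global-in-time existence of $\Psi_t$ on a compact $M$ are a welcome (if minor) addition that the paper leaves implicit.
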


\begin{proof} The first claim follows immediately from equation \eqref{rel 4}. 
To derive the second claim we consider a general flow 
$$
\dt\,F(t,x) = \sigma(t,x) \, \nu(t,x) + S(t,x).
$$
Since $S(t,x)$ is {\sl tangential} to $M_t$, we can introduce the solution $\Psi_t: M \to M$ be
 the following ordinary differential equation (ODE)
$$
\dt\, \Psi_t(x) = - S(t,\Psi_t(x)).
$$
Then, by setting $S(t,x) =: S^i(t,x) \, F_i(t,x)$ 
we see that the map 
$$
\widetilde F(t,x):=F(t,\Psi_t(x))
$$
satisfies the evolution equation
$$
\aligned 
\dt\,\widetilde F(t,x)
& = \sigma(t,\Psi_t(x)) \, \nu(t,\Psi_t(x)) + S(t,\Psi_t(x)) + F_i(t,\Psi_t(x))\dt\Psi^i_t(x)
\\
& = \sigma(t,\Psi_t(x)) \, \nu(t,\Psi_t(x))
\\
& =: \widetilde\sigma(t,x) \, \widetilde\nu(t,x).
\endaligned 
$$ 
\end{proof} 

The proposition above shows that a general flow $F$ and its normalized version 
$\widetilde F$ can be identified geometrically. Therefore, without genuine restriction, 
our analysis will often be focused on normal flows, which have the general form 
\be
\label{general}
\ddt\, F = \acc \, \nu - \nabla\nrg. 
\ee
It should be observed that, at this stage, the normal acceleration $\acc$ has not been defined yet. 
Our results will show that by prescribing this scalar field (in the forthcoming section) the evolution of the hypersurface
is uniquely determined.

To conclude this section, in view of the computations done in Huisken \cite{huisken} for the standard mean curvature flow, we obtain 
the following first-order evolution equations for the induced metric, volume form, 
second fundamental form, and mean curvature of the hypersurface. 

\begin{lemma}\label{lemma 1} 
The evolution of the tensor fields $\dd gij, d\mu, \nu, \dd hij, H$
associated with a general flow  
$F:[0,T)\times M\to \real{n+1}$ is determined by the equations 
\begin{eqnarray}
&& \dt\dd gij = -2\vel\dd hij+\nabla_i\tvel_j+\nabla_j\tvel_i,
\label{evol 2}
\\
&& \dt d\mu = ({d}^\dagger\tvel-\vel H){d}\mu,
\label{evol 3}
\\
&& \dt\nu = -(\nabla^i\vel+\uu hik\tvel_k)F_i,
\label{evol 4}
\\
&& \dt\dd hij =  \nabla_i\nabla_j\vel-\vel h_i^k\dd hkj+h_i^k\nabla_j\tvel_k
+ h_j^k\nabla_i\tvel_k+\nabla^k\dd hij\tvel_k,
\label{evol 5}
\\
&& \dt H = \lap \vel+\vel|h|^2+\tvel^i\nabla_iH, 
\label{evol 6}
\end{eqnarray}
where ${d}^\dagger\tvel=\nabla^i\tvel_i=\nabla_iS^i$ denotes the divergence of a vector field, 
and $|h|^2 := \uu hij\dd hij$ denotes the (squared) norm of a $2$-tensor field.
\end{lemma}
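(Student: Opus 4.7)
The strategy is to differentiate the defining expression of each tensor field in $t$, commute $\partial_t$ with spatial partial derivatives (which is justified because $[\partial_t,\partial_i]=0$), and then substitute the structure equations of Lemma~\ref{lemma 2.2} together with the Gauss--Weingarten--Codazzi relations of Lemma~\ref{Gauss}.

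I would begin with the metric and volume form. From $g_{ij}=\langle F_i,F_j\rangle$ and \eqref{rel 2}, which yields $\langle \partial_t F_i,F_j\rangle=\nabla_i S_j-\sigma\, h_{ij}$, symmetrizing in $(i,j)$ gives \eqref{evol 2}. Then Jacobi's formula $\partial_t\log\sqrt{\det g}=\frac{1}{2}g^{ij}\partial_t g_{ij}$, combined with $g^{ij}\nabla_i S_j=d^\dagger S$ and $g^{ij}h_{ij}=H$, produces \eqref{evol 3}. For the normal vector, $|\nu|^2=1$ implies that $\partial_t\nu$ is tangent to $M$; differentiating $\langle F_i,\nu\rangle=0$ and using \eqref{rel 1} gives $\langle F_i,\partial_t\nu\rangle=-(\nabla_i\sigma+h_i^k S_k)$, which upon raising the index yields \eqref{evol 4}.

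The evolution of the second fundamental form is the delicate step. I would start from $h_{ij}=\langle\partial_i\partial_j F,\nu\rangle$, which is valid because the tangential correction $\Gamma_{ij}^k F_k$ is annihilated by $\nu$. Differentiating in $t$, substituting $\partial_t F=\sigma\nu+S^k F_k$ and expanding, one reduces the second partial derivatives of $\nu$ and of $F_k$ using Weingarten \eqref{rel 1.2}, Gauss \eqref{rel 1.1}, and the expression for $\partial_t\nu$ just obtained. Converting partials to covariant derivatives introduces Christoffel corrections that reorganize cleanly: the $\sigma\nu$ contribution yields $\nabla_i\nabla_j\sigma-\sigma\, h_i^k h_{kj}$, while the tangential contribution $S^k F_k$ produces the two cross terms $h_i^k\nabla_j S_k+h_j^k\nabla_i S_k$ together with the transport term $S^k\nabla_k h_{ij}$, after one application of the Codazzi identity \eqref{rel 1.3}. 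This establishes \eqref{evol 5}. Finally \eqref{evol 6} follows from $\partial_t H=(\partial_t g^{ij})h_{ij}+g^{ij}\partial_t h_{ij}$ together with $\partial_t g^{ij}=-g^{ik}g^{jl}\partial_t g_{kl}$: the four $h\cdot\nabla S$ terms that appear collapse to the single transport term $S^i\nabla_i H$, while $g^{ij}\nabla_i\nabla_j\sigma=\Delta\sigma$ and $g^{ij}h_i^k h_{kj}=|h|^2$ produce the remaining pieces.

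The principal obstacle is the bookkeeping in \eqref{evol 5}: partial and covariant derivatives do not coincide, the connection itself depends on $t$ through the evolving metric, and one must track the Christoffel corrections carefully when rewriting the various third-order derivatives of $F$ that arise, invoking Codazzi at the right moment both to symmetrize the cubic-in-$h$ expressions and to identify the transport term $S^k\nabla_k h_{ij}$.
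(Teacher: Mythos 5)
Your plan is correct and is essentially the computation the paper itself invokes: the paper gives no written proof of this lemma, deferring to Huisken's standard calculations for the mean curvature flow, and your derivation (differentiate the defining formulas, use Lemma~\ref{lemma 2.2} and the Gauss--Weingarten--Codazzi relations, with the tangential part of the velocity contributing the Lie-derivative terms via Codazzi) is exactly that argument adapted to a general flow. The only nitpick is in your description of \eqref{evol 6}: the four $h\cdot\nabla S$ terms cancel against each other outright, while the transport term $S^i\nabla_i H$ comes separately from $g^{ij}S^k\nabla_k h_{ij}$ rather than being what those four terms "collapse to."
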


In particular, if the flow is normal we take $S=0$ and $A = -\, de$ in Lemma~\ref{lemma 1}
and obtain 
\begin{eqnarray}
&& \dt\,\dd gij = -2\vel\dd hij,
\label{evol 277}
\\
&& \dt\, d\mu = -\vel H {d}\mu,
\label{evol 377}
\\
&& \dt\,\nu = - \nabla^i\vel F_i,
\label{evol 477}
\\
&& \dt\,\dd hij =  \nabla_i\nabla_j\vel-\vel h_i^k\dd hkj, 
\label{evol 577}
\\
&& \dt\, H = \lap \vel+\vel|h|^2.  
\label{evol 677}
\end{eqnarray}

%==================================================================================================

\section{The hyperbolic mean curvature flow} 
\label{S-HMCF} 

We are now in a position to introduce the evolution equation that we propose in this paper.  
The flow is going to be defined from an Hamiltonian principle based on a geometrically natural action,
consisting of a kinetic term  
and an internal energy term, which   
is defined geometrically as the local volume density of the hypersurface.

More precisely, let $F:[0,T]\times M \to\real{n+1}$ be a smooth family of immersions of
an orientable manifold $M$ of dimension $n$ into $\real{n+1}$.
Define the total {\sl kinetic energy} at the time $t$ by
$$
K(t) := \int_M  \frac{1}{2} \left|\frac{{d}}{{d}t} F\right|^2{d}\mu_t
$$
and, after integrating over the time interval $[0,T]$, consider 
the action
\be 
\label{act 1}
J_K(F) := \int_{0}^{T}\int_M
       \frac{1}{2} \left|\frac{{d}}{{d}t} F\right|^2{d}\mu_tdt.
\ee 
Define the total {\sl internal energy} of the hypersurface at the time $t$ by 
$$
V(t) := \int_M  \frac{1}{2} |\nabla F|^2 \, {d}\mu_t
=\frac{n}{2}\int_M{d}\mu_t, 
$$
solely determined by the induced volume form, and consider the corresponding action
\be
\label{act 2}
J_V(F) := \frac{n}{2}\int_{0}^{T}\int_M{d}\mu_tdt.
\ee

According to the Hamiltonian principle, we impose that the evolution of the hypersurface 
is stationary for the action $J_V - J_K$,  that is, 
\be 
\label{hamilton}
\ds(J_V - J_K)(F+s\Phi)\bigl\vert_{s=0} = 0 
\ee 
for all $\Phi\in C_0^\infty([0,T]\times M, \real{n+1})$ (compactly supported maps that are 
differentiable oat any order).

\begin{remark} 
Obviously, if the volume of the manifold is infinite, the functionals $J_K(F)$ and $J_V(F)$ above are
only formally defined. This difficulty can be easily overcome by restricting attention  
to any compact subset of $M$. However, since the stationarity condition \eqref{hamilton} 
implied by the Hamiltonian principle itself is formulated
in terms of compactly supported variations, this is unnecessary. 
\end{remark} 

We now show:

\begin{proposition}[Hyperbolic mean curvature flow equation]
\label{eq of motion}
The stationary solutions of the action functional $J_V - J_K$ satisfy 
the equation of motion
\be \tag{HMCF}
\aligned 
	{d^2 \over dt^2}  F =& \, \alpha \, \nu + A^k F_k,
	\\
	\alpha :=& (\nrg-|\tvel|^2)H-\vel {d}^\dagger\tvel,
	\\
	A^k :=& (\vel H-{d}^\dagger\tvel)\tvel^k-\nabla^k\nrg.  
\endaligned 
\ee
\end{proposition}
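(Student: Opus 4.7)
The plan is to apply the Hamiltonian principle \eqref{hamilton} directly: for an arbitrary $\Phi\in C_0^\infty([0,T]\times M,\R^{n+1})$, set $F_s:=F+s\Phi$ and compute $\frac{d}{ds}(J_V-J_K)(F_s)\big|_{s=0}$. I would decompose the test field as $\Phi=\phi\,\nu+\psi^i F_i$ and expect to obtain a scalar normal equation (from $\phi$) yielding the formula for $\alpha$, and a tangential covector equation (from $\psi^i$) yielding the formula for $A^k$.

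The first step is to compute the infinitesimal deformation of the induced volume form under the \emph{spatial} variation $F\mapsto F+s\Phi$. A direct calculation using the Gauss--Weingarten identities of Lemma~\ref{Gauss}, entirely parallel to the derivation of \eqref{evol 2}--\eqref{evol 3}, gives $\delta_s g_{ij}=\nabla_i\psi_j+\nabla_j\psi_i-2\phi\,h_{ij}$ and hence $\delta_s(d\mu)=(d^\dagger\psi-\phi H)\,d\mu$. This immediately produces $\frac{d}{ds}J_V|_{s=0}=\tfrac{n}{2}\int_0^T\int_M(d^\dagger\psi-\phi H)\,d\mu\,dt$. For $J_K$, differentiation of $\tfrac12|F_t+s\Phi_t|^2\,d\mu_s$ at $s=0$ yields two pieces: $\langle F_t,\Phi_t\rangle\,d\mu$ and $\tfrac12|F_t|^2\,\delta_s(d\mu)$. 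I would integrate the first by parts in $t$, using the compact support of $\Phi$ to kill the boundary terms and equation \eqref{evol 3} to differentiate $d\mu$ in time, obtaining $-\int\int\langle F_{tt},\Phi\rangle\,d\mu\,dt-\int\int\langle F_t,\Phi\rangle(d^\dagger S-\sigma H)\,d\mu\,dt$.

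Combining both variations, using the identity $\tfrac{n}{2}-\tfrac12|F_t|^2=n-e$, and integrating $(n-e)\,d^\dagger\psi$ by parts in $x$ (no boundary, so it becomes $\psi^i\nabla_i e$ since $\nabla n\equiv 0$), the stationarity condition becomes
$$
\int_0^T\!\!\int_M\Bigl(\langle F_{tt},\Phi\rangle+\langle F_t,\Phi\rangle(d^\dagger S-\sigma H)+\psi^i\nabla_i e-(n-e)H\phi\Bigr)d\mu\,dt=0.
$$
Decomposing $F_{tt}=\alpha\,\nu+A^iF_i$ and $F_t=\sigma\,\nu+S^iF_i$ and using that $\phi$ and $\psi^i$ are arbitrary forces the coefficients to vanish separately,
$$
\alpha+\sigma(d^\dagger S-\sigma H)-(n-e)H=0,\qquad A_i+S_i(d^\dagger S-\sigma H)+\nabla_i e=0.
$$
The tangential equation rearranges directly to $A^k=(\sigma H-d^\dagger S)S^k-\nabla^k e$. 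For the normal one, the algebraic identity $n-e+\sigma^2=e-|S|^2$ (an immediate consequence of $e=\tfrac12(\sigma^2+|S|^2+n)$) converts it into $\alpha=(e-|S|^2)H-\sigma\,d^\dagger S$, which are exactly the formulas asserted in (HMCF).

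The main point of care, and the only mildly subtle step, is the bookkeeping of the \emph{spatial} variation of $d\mu$ running in parallel with the already-known \emph{temporal} variation from Lemma~\ref{lemma 1}: one must not confuse the two. Once the analogue $\delta_s(d\mu)=(d^\dagger\psi-\phi H)d\mu$ of \eqref{evol 3} is correctly in place, everything else reduces to a single integration by parts in $t$, one in $x$, and a short algebraic identification of normal and tangential components, so I do not anticipate any genuine obstacle beyond routine calculation.
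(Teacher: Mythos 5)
Your proposal is correct and follows essentially the same route as the paper: a first variation of $J_V-J_K$, one integration by parts in $t$ using \eqref{evol 3}, one in space, and the algebraic identity $n-\nrg+\vel^2=\nrg-|\tvel|^2$ coming from $\nrg=\tfrac12(\vel^2+|\tvel|^2+n)$. The only (cosmetic) differences are that you decompose $\Phi=\phi\,\nu+\psi^iF_i$ at the outset and read off two scalar Euler--Lagrange equations, and you obtain the spatial variation of $d\mu$ from $\delta_s g_{ij}$, whereas the paper keeps $\Phi$ as an arbitrary $\real{n+1}$-valued field, writes $\delta_s(d\mu)=\uu gij\langle F_i,\Phi_j\rangle\,d\mu$, integrates by parts via $\lap F=H\nu$, and decomposes the resulting vector $P$ at the end.
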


For instance, if the initial velocity is normal, i.e. if $S_{|t=0}=0$, then it will
follow from the conservation of momentum that $S_{|t}=0$ for all $t$, so that
the HMCF equation reduces to the much simpler equation
\be 
\tag{HMCF'}\label{HMCF'}
\aligned 
& \frac{d^2}{dt^2} F =\nrg \,H\nu-\nabla\nrg,
\\
& \left(\frac{d}{dt} F\right)^{\top}_{\vert t=0}=0.
\endaligned
\ee 
Comparing with the general flow equation \eqref{general} we see that the minimal action 
principle  allows us to identify the normal acceleration, as a linear function in the 
mean curvature $H$: 
$$
\alpha =  \nrg \, H. 
$$
In consequence, for normal flows the evolution of the normal component of the velocity is
proportional to the mean curvature  
$$
{d \over dt}\, \sigma = e(\sigma) \, H. 
$$
Later in this paper, we will prove that the equation (HMCF') is hyperbolic. We will not treat here 
the general system (HMCF) except in the next section where we will derive some (a-priori)
conservation laws for the general flow. The subsequent sections are entirely devoted to
the normal equation (HMCF'), since this (as already noted in the introduction)
is sufficient from the geometric point of view.

Observe also that $\nabla\nrg$ depends on second-order derivatives of $F$, namely
on mixed derivatives in space and time. In agreement with the standard mean curvature flow 
(which is parabolic), the acceleration $\acc$ is defined in terms of the curvature of the manifold $M$, 
and $\acc$, considered as an operator on $M$, is elliptic.  
Therefore, this justifies to refer to the proposed flow
as the {\sl hyperbolic mean curvature flow}. The following sections will show, both, some similarities
and some marked differences between the parabolic and hyperbolic versions of the mean curvature flow. 

\begin{proof}  
A simple computation yields  
$$
\aligned 
\ds J_V(F+s\Phi)\bigl\vert_{s=0}
& = \frac{n}{2}\int_0^T\int_M\uu gij
\langle F_i,\Phi_j\rangle {d}\mu_tdt
\\
& = -\frac{n}{2}\int_0^T\int_M
H\langle \nu,\Phi\rangle {d}\mu_tdt\label{var V},
\endaligned 
$$ 
where, for the second identity, we have integrated by parts and used 
the contracted Gauss formula $\lap F=H\nu$ (a consequence of \eqref{rel 1.1}).

On the other hand, for $J_K$ we obtain the first variation formula 
$$ 
\ds J_K(F+s\Phi)\bigl\vert_{s=0}
 = 
 \int_0^T\int_M\left(
\left\langle\dt F,\dt\Phi\right\rangle+(\nrg-\frac{n}{2})\uu gij
\langle F_i,\Phi_j\rangle\right){d}\mu_tdt,
$$
in which we now successively integrate by parts each term of the right-hand side. 
For the first term we find 
$$
\aligned 
& \int_0^T\int_M
\left\langle\dt F,\dt\Phi\right\rangle {d}\mu_tdt
\\
&=  \int_0^T\dt
\left(\int_M\left\langle\dt F,\Phi\right\rangle {d}\mu_t\right)dt
 -\int_0^T\int_M\left\langle\ddt F,\Phi
\right\rangle {d}\mu_t dt
\\
& \qquad 
-\int_0^T
\left(\int_M\left\langle\dt F,\Phi\right
\rangle \dt {d}\mu_t\right) dt
\\
& = \int_0^T\int_M\left\langle
-\ddt F+(\vel H-{d}^\dagger\tvel)\dt F,\Phi\right\rangle {d}\mu_t dt,
\endaligned  
$$
where we used \eqref{evol 3}. For the second term we obtain
$$ 
\int_0^T\int_M
(\nrg-\frac{n}{2})\uu gij\langle F_i,\Phi_j\rangle {d}\mu_tdt
= -\int_0^T\int_M
\langle\nabla\nrg+(\nrg-\frac{n}{2}) H\nu,\Phi\rangle {d}\mu_tdt. 
$$
Combining the above identities together, we deduce that 
$$ 
\ds(J_V - J_K)(F+s\Phi)\bigl\vert_{s=0}
=\int_{0}^{T}\int_M\langle P,\Phi\rangle {d}\mu_tdt 
$$ 
 with 
$$
\aligned 
P :=&  \ddt F-(\vel H-{d}^\dagger\tvel)\dt F+\nabla\nrg + (\nrg-n)H\nu
\\
=&  \ddt F+\left(\frac{1}{2}(|S|^2-\vel^2 -n)H+\vel {d}^\dagger\tvel\right)\nu
       +\left(\nabla^k\nrg-(\vel H-{d}^\dagger\tvel)\tvel^k\right)F_k.
\endaligned 
$$
Since $\Phi\in C_0^\infty([0,T]\times M, \real{n+1})$ is arbitrary this completes the derivation of (HMCF).
\end{proof}
%==================================================================================================

\section{Conservation laws and balance laws}
\label{S-cons} 

In this section we derive various conservation laws satisfied by solutions of the 
hyperbolic mean curvature flow (HMCF), and we show that, as far as the geometry of the hypersurface
is concerned, one may work within the class of normal flows (HMCF'). 
Consider a family of immersions $F:[0,T) \times M\to\real{n+1}$ satisfying 
the hyperbolic mean curvature equation (HMCF). In the present section, 
all tensor fields under consideration are assumed to be sufficiently smooth. 
We use the so-called $abc$ method discussed by Shatah and Struwe in \cite{shatahstruwe}, and
 multiply (HMCF) by an expression of the general form
$$
a\dt F+b\cdot \nabla F+c F, 
$$
by choosing the variable coefficients $a$, $b$, and $c$ so that higher-order terms in 
the corresponding evolution equations admit a divergence form.

The following lemma shows that there exists a divergence-type form for (HMCF), which will be useful 
for the derivation of conservation laws (modulo lower-order terms). 

\begin{lemma}[A general identity] 
\label{vectorfield}
Let $Y=Y(t,y)$ be a time dependent vector field on $\real{n+1}$.
Every solution of (HMCF) satisfies 
\begin{eqnarray}
\dt\left(\left\langle\dt F,Y\right\rangle{d}\mu_t\right)
&=&\left( \nabla^k\Bigl((n-\nrg)\langle F_k,Y\rangle\Bigr)
+(\nrg-n)\uu gijDY(F_i,F_j)
\phantom{\dt}\right.\nonumber\\
&& \left.+\,DY\left(\dt F,\dt F\right)+\left\langle\dt F,Y_t\right\rangle \right) 
\, {d}\mu_t,
\label{vect}
\end{eqnarray}
where $Y_t=\frac{\partial}{\partial t}Y$ and $DY$ is the spatial differential of $Y$, i.e. 
$$DY=
\dd\delta\alpha\gamma\frac{\partial Y^\gamma}{\partial y^\beta}\,dy^\alpha\otimes dy^\beta.
$$
\end{lemma}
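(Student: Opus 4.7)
\smallskip
\noindent\textbf{Proof sketch.}
My plan is to expand the left-hand side by the product rule, substitute (HMCF) for $\ddt F$ and the evolution equation \eqref{evol 3} for $\partial_t d\mu_t$, and then recognize the tangential contributions as a divergence by means of the Gauss equation. First I would write
$$
\dt\Big(\langle \dt F, Y\rangle\, d\mu_t\Big)
=\Big(\langle \ddt F,Y\rangle+\langle \dt F,Y_t\rangle+DY(\dt F,\dt F)
+\langle \dt F,Y\rangle\,({d}^\dagger\tvel-\vel H)\Big)\,d\mu_t,
$$
using the chain rule $\dt Y(t,F(t,x))=Y_t+DY\cdot \dt F$ and the evolution of the volume form. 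The two terms $\langle \dt F,Y_t\rangle$ and $DY(\dt F,\dt F)$ already appear on the right-hand side of \eqref{vect}, so it suffices to identify $\langle \ddt F,Y\rangle+\langle \dt F,Y\rangle({d}^\dagger\tvel-\vel H)$ with $\nabla^k\bigl((n-\nrg)\langle F_k,Y\rangle\bigr)+(\nrg-n)g^{ij}DY(F_i,F_j)$.

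Next I would decompose $\dt F=\vel\,\nu+\tvel^kF_k$ and plug in the (HMCF) formulas
$\alpha=(\nrg-|\tvel|^2)H-\vel {d}^\dagger\tvel$ and $A^k=(\vel H-{d}^\dagger\tvel)\tvel^k-\nabla^k\nrg$.
Adding $\langle \dt F,Y\rangle({d}^\dagger\tvel-\vel H)$ makes the terms involving $\vel H-{d}^\dagger\tvel$ cancel in the tangential direction, while the normal coefficient collapses to $(\nrg-\sigma^2-|\tvel|^2)H=(n-\nrg)H$, because $e=\tfrac12(\sigma^2+|\tvel|^2+n)$ forces $\sigma^2+|\tvel|^2=2\nrg-n$. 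At this point the combined quantity equals
$$
(n-\nrg)H\langle \nu,Y\rangle-\nabla^k\nrg\,\langle F_k,Y\rangle.
$$

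The final step is the purely geometric identity
$$
\nabla^k\langle F_k,Y\rangle = H\langle\nu,Y\rangle+g^{ij}DY(F_i,F_j),
$$
which follows immediately from $\nabla_iF_k=h_{ik}\nu$ (Lemma~\ref{Gauss}) and the chain rule $\nabla_i Y=DY\cdot F_i$. Multiplying this identity by $(n-\nrg)$ and adding $-\nabla^k\nrg\,\langle F_k,Y\rangle=\nabla^k(n-\nrg)\,\langle F_k,Y\rangle$ produces exactly the divergence $\nabla^k\bigl((n-\nrg)\langle F_k,Y\rangle\bigr)$, with the remaining term $(\nrg-n)g^{ij}DY(F_i,F_j)$ matching \eqref{vect}.

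The only slightly delicate point is the bookkeeping that makes the tangential parts reorganize into a divergence: the specific coefficients in (HMCF) are tailored so that the $(\vel H-{d}^\dagger\tvel)$ pieces cancel against those coming from $\partial_t d\mu_t$, leaving only $\nabla^k\nrg$, which then combines with the Gauss identity. No single step is hard; the obstacle is keeping the normal/tangential decomposition straight and using $\sigma^2+|\tvel|^2=2\nrg-n$ at the right moment.
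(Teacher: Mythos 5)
Your proposal is correct and follows essentially the same route as the paper: expand the left-hand side by the product rule together with \eqref{evol 3}, substitute (HMCF), observe that the $(\vel H-{d}^\dagger\tvel)$ terms cancel and the normal coefficient reduces to $(n-\nrg)H$ via $\vel^2+|\tvel|^2=2\nrg-n$, and then use $\Delta F=H\nu$ to recognize the divergence $\nabla^k\bigl((n-\nrg)\langle F_k,Y\rangle\bigr)$. All the sign bookkeeping in your sketch checks out, so no gap remains.
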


\begin{proof}
We multiply (HMCF) by $Y$ and compute
\begin{eqnarray}
&&\dt\left(\left\langle\dt F,Y\right\rangle{d}\mu_t\right)\nonumber\\
& = &\left(  \Bigl((\nrg-|\tvel|^2)H-\vel{d}^\dagger\tvel\Bigr)
\langle\nu,Y\rangle\,+\,\Bigl((\vel H-{d}^\dagger\tvel)\tvel^k-\nabla^k\nrg
\Bigr)\langle F_k,Y\rangle\phantom{\dt}\right.\nonumber\\
&& \left.+ \left\langle\dt F,Y\right\rangle({d}^\dagger\tvel-\vel H)
  +\,DY\left(\dt F,\dt F\right)+\left\langle\dt F,Y_t\right\rangle \right) \, {d}\mu_t.\nonumber
\end{eqnarray}
Since $\dt F=\sigma\nu+S^kF_k$ we obtain 
$$
\aligned 
& \dt\left(\left\langle\dt F,Y\right\rangle{d}\mu_t\right)
\\
& = \left( \bigl(\nrg-|\tvel|^2-\vel^2\bigr)H\langle\nu,Y\rangle
      -\nabla^k\nrg\langle F_k,Y\rangle 
       + DY\left(\dt F,\dt F\right)+\left\langle\dt F,Y_t\right\rangle \right) 
\, {d}\mu_t,
\\
& = \left( \bigl(n-\nrg\bigr)H\langle\nu,Y\rangle
       -\nabla^k\nrg\langle F_k,Y\rangle 
    + DY\left(\dt F,\dt F\right)+\left\langle\dt F,Y_t\right\rangle \right) 
\, {d}\mu_t.
\endaligned
$$
From $\nabla^kF_k=\Delta F=H\nu$ we get 
$$\bigl(n-\nrg\bigr)H\langle\nu,Y\rangle
-\nabla^k\nrg\langle F_k,Y\rangle
=\nabla^k\Bigl((n-\nrg)\langle F_k,Y\rangle\Bigr)
-(n-\nrg)\uu gijDY(F_i,F_j).
$$
Inserting this in the above expression for 
$\dt\left(\left\langle\dt F,Y\right\rangle{d}\mu_t\right)$ we arrive at \eqref{vect}. 
\end{proof}

From Lemma~\ref{vectorfield} we now derive various conservation laws or balance laws of interest. 
The first result below is a consequence of the invariance of (HMCF) under isometries of $\real{n+1}$.  

\begin{proposition}[Local continuity equation]
\label{continuity}
Every solution of (HMCF) satisfies 
\begin{eqnarray}
\dt\left(\left\langle\dt F,Y\right\rangle{d}\mu_t\right)
&=&\nabla^k\bigl((n-\nrg)\langle F_k,Y\rangle\bigr){d}\mu_t,
\label{cont}
\end{eqnarray}
where $Y=Y^\alpha\frac{\partial}{\partial y^\alpha}$ is any time independent
Killing vector field on $\real{n+1}$.
\end{proposition}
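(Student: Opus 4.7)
The plan is to derive Proposition~\ref{continuity} as a direct corollary of the master identity \eqref{vect} from Lemma~\ref{vectorfield}, by exploiting the two defining features of a time-independent Killing field $Y$ on Euclidean space.

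First, I would substitute the specific vector field $Y$ into \eqref{vect}. Since $Y$ is assumed to be time-independent, the partial derivative $Y_t$ vanishes identically on $\real{n+1}$, and therefore the boundary-like term $\langle \dt F, Y_t\rangle$ drops out immediately. The remaining work is to show that the two quadratic expressions involving $DY$, namely
$$
(\nrg - n)\, g^{ij}\, DY(F_i, F_j) \quad \text{and} \quad DY(\dt F, \dt F),
$$
also vanish.

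Second, I would invoke the Killing condition. On $\real{n+1}$ equipped with the flat Euclidean metric, $Y$ is a Killing vector field precisely when its covariant derivative (which here coincides with the spatial differential $DY$) is antisymmetric as a $(0,2)$-tensor, i.e.\ $DY(X,Z) + DY(Z,X) = 0$ for all vectors $X,Z \in \real{n+1}$. Applied with $X = Z = \dt F$ this gives $DY(\dt F, \dt F) = 0$. Applied to $X = F_i$, $Z = F_j$ it gives that $DY(F_i,F_j)$ is antisymmetric in $(i,j)$, while $g^{ij}$ is symmetric; hence $g^{ij} DY(F_i, F_j) = 0$ as well. Plugging these vanishings into \eqref{vect} leaves exactly
$$
\dt\left(\left\langle \dt F, Y\right\rangle {d}\mu_t\right)
= \nabla^k\bigl((n-\nrg)\langle F_k, Y\rangle\bigr) \, {d}\mu_t,
$$
which is the claimed identity \eqref{cont}.

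There is essentially no obstacle here beyond correctly identifying the Killing condition as antisymmetry of $DY$ on flat $\real{n+1}$; the proposition is a clean specialization of Lemma~\ref{vectorfield}. One minor point worth noting in the write-up is that translations (for which $DY \equiv 0$) and rotations (for which $DY$ is a constant antisymmetric matrix) jointly span the Killing algebra of $\real{n+1}$, so this single argument recovers both the momentum balance and the angular-momentum balance uniformly.
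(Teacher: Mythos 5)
Your argument is correct and coincides with the paper's own proof: both specialize Lemma~\ref{vectorfield} and use that $Y_t=0$ for a time-independent field while skew-symmetry of $DY$ (the Killing condition on flat $\real{n+1}$) kills the two quadratic terms, the contraction against the symmetric $g^{ij}$ and the evaluation on $(\dt F,\dt F)$. Your write-up is in fact slightly more careful than the paper's, which attributes all three vanishing terms to skew-symmetry in one breath.
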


\begin{proof}
Let $Y=Y^\alpha\frac{\partial}{\partial y^\alpha}$
be a Killing vector field on $\real{n+1}$, that is, 
$Y$ generates an isometry
on $\real{n+1}$. Since $Y$ is a Killing vector field, $DY$ is skew-symmetric, so that 
the last three terms in (\ref{vect}) vanish and 
we get \eqref{cont}. 
\end{proof}

\begin{proposition}[Local momentum equation] 
\label{62}
Every solution of (HMCF) satisfies the 
balance law
$$
\dt \left(\left\langle\dt F, F \right\rangle \, d\mu \right)
=
\nabla^k\bigl((n-\nrg)\langle F,F_k\rangle\bigr) \, d\mu
+\bigl((n+2) \, \nrg - n(n+1)\bigr) \, d\mu.
$$ 
\end{proposition}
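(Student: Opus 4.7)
The plan is to apply the general identity from Lemma~\ref{vectorfield} with a judicious choice of vector field $Y$ on $\real{n+1}$. Whereas Proposition~\ref{continuity} was obtained by taking $Y$ to be Killing (so that $DY$ is skew and the last three terms of \eqref{vect} drop out), here I would instead take $Y$ to be the \emph{position} (Euler) vector field
$$
Y(y) := y^\alpha \frac{\partial}{\partial y^\alpha},
$$
which is time-independent and whose spatial differential is the identity: $\frac{\partial Y^\gamma}{\partial y^\beta} = \delta^\gamma_\beta$, so that $DY(X,Z) = \langle X,Z\rangle$ for any two vectors $X,Z \in \real{n+1}$. Then $Y\circ F = F$, and in particular $\langle \dt F, Y\rangle = \langle \dt F, F\rangle$, which matches the left-hand side of the stated balance law.

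Next I would evaluate the four terms appearing on the right-hand side of \eqref{vect} for this choice of $Y$. Using $DY(X,Z) = \langle X,Z\rangle$, the divergence-form term $\nabla^k\bigl((n-\nrg)\langle F_k,Y\rangle\bigr)$ becomes exactly $\nabla^k\bigl((n-\nrg)\langle F,F_k\rangle\bigr)$, which accounts for the first term of the proposition. For the remaining three terms:
$$
\uu gij DY(F_i,F_j) = \uu gij \dd gij = n,
\qquad
DY(\dt F,\dt F) = |\dt F|^2,
\qquad
\langle\dt F, Y_t\rangle = 0,
$$
since $Y_t = 0$. Combining them and using the definition of the local energy density to rewrite $|\dt F|^2 = 2\nrg - n$, the lower-order contribution becomes
$$
(\nrg-n)\,n + (2\nrg - n) = (n+2)\nrg - n(n+1),
$$
which is exactly the remaining term in the statement.

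Putting these two computations back into \eqref{vect} gives the claim. There is no real obstacle here: the proposition is essentially a direct substitution into Lemma~\ref{vectorfield}. The only point requiring a bit of care is the bookkeeping of the sign of $(n-\nrg)$ versus $(\nrg-n)$, and the substitution $|\dt F|^2 = 2\nrg - n$ coming from \eqref{local energy}. Conceptually, what this calculation records is that $Y = y^\alpha\partial_\alpha$ is \emph{conformal} rather than Killing, with $\mathcal L_Y g_{\real{n+1}} = 2 g_{\real{n+1}}$, which is precisely why a non-trivial right-hand side (a genuine balance law rather than a conservation law) appears.
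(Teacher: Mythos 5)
Your proposal is correct and coincides with the paper's own proof: both apply Lemma~\ref{vectorfield} to the position vector field $Y(y)=y$, use $DY(v,w)=\langle v,w\rangle$ and $Y_t=0$, and then simplify $(\nrg-n)n+2\nrg-n=(n+2)\nrg-n(n+1)$. The closing remark about $Y$ being conformal rather than Killing is a nice (and accurate) conceptual gloss not present in the paper, but the argument itself is identical.
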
 

\begin{proof} 
We apply Lemma \ref{vectorfield} to the vector field $Y(y,t):=y$. 
Then $DY(v,w)=\langle v,w\rangle$ and $Y_t=0$. Moreover $Y(F(x,t),t)=F(x,t)$, so
that (\ref{vect}) becomes
\begin{eqnarray}
\dt\left(\left\langle\dt F,F\right\rangle{d}\mu_t\right)
&=&\left( \nabla^k\Bigl((n-\nrg)\langle F_k,F\rangle\Bigr)
+(\nrg-n)\uu gij\langle F_i,F_j\rangle
\phantom{\dt}\right.\nonumber\\
&& \left.+\,\left\langle\dt F,\dt F\right\rangle\right) 
\, {d}\mu_t\nonumber\\
&=&\left( \nabla^k\Bigl((n-\nrg)\langle F_k,F\rangle\Bigr)
+(\nrg-n)n+2\nrg-n\right) \, {d}\mu_t,
\nonumber
\end{eqnarray}
which establishes the desired identity.
\end{proof}

We next turn to the component $S$ of the velocity vector. 

\begin{proposition}[Local tangential velocity equation] 
Every solution of (HMCF) satisfies the conservation law 
\be 
\label{momentum}
\dt\bigl(\tvel(X) {d}\mu_t\bigr)=0, 
\ee 
where $X\in\Gamma(TM)$ is any time-independent vector field on $M$.
\end{proposition}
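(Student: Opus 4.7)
My plan is to prove \eqref{momentum} by a direct computation, combining the tangential component of (HMCF) with the evolution equation \eqref{evol 3} for the volume form. The essential observation is that the tangential acceleration $\tacc^k = A^k$ in (HMCF) is cooked up precisely so that, after reconciling with \eqref{rel 4}, the evolution of the tangential velocity $\tvel_k$ becomes proportional to $\tvel_k$ itself, with the proportionality factor being exactly the negative of the factor appearing in $\dt d\mu$.

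Concretely, I would first use the structure equation \eqref{rel 4}, $\tacc = \dt\tvel - d\nrg$, together with the (HMCF) formula $A^k = (\vel H - d^\dagger\tvel)\tvel^k - \nabla^k\nrg$, to derive
\be
\dt\tvel_k = (\vel H - {d}^\dagger\tvel)\,\tvel_k.
\label{plan-vel}
\ee
In this step the $\nabla^k\nrg$ contributions cancel, which is the decisive algebraic feature. Next, because $X$ is time-independent on $M$, $\dt X^i = 0$, and hence
\be
\dt\bigl(\tvel(X)\,d\mu\bigr) = X^i \dt\tvel_i\,d\mu + \tvel_i X^i\,\dt d\mu.
\label{plan-prod}
\ee
Substituting \eqref{plan-vel} into the first term and the formula $\dt d\mu = (d^\dagger\tvel - \vel H)\,d\mu$ from \eqref{evol 3} into the second term, the two contributions cancel exactly, giving the claim.

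The computation is essentially trivial once \eqref{plan-vel} is in hand, so there is no real obstacle in the proof; the conceptual content is simply the recognition that this conservation law reflects the invariance of the action $J_V - J_K$ under time-independent reparametrizations of $M$ (a Noether-type statement), which is why the gradient terms $\nabla^k\nrg$ must drop out. One minor care-point is that, unlike Proposition \ref{continuity}, this identity cannot be read off directly from Lemma \ref{vectorfield}, because the relevant vector $dF(X) = X^i F_i$ lives only along the image of $F$, not as a fixed vector field on $\real{n+1}$; hence the direct route via \eqref{rel 4} and \eqref{evol 3} is the cleanest.
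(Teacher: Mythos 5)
Your proposal is correct and follows essentially the same route as the paper: the paper likewise combines the structure equation \eqref{rel 4} with the tangential component of (HMCF) to get $\dt\bigl(\tvel(X)\bigr)=(\vel H-{d}^\dagger\tvel)\,\tvel(X)$ (the $\nabla\nrg$ terms cancelling), and then concludes via \eqref{evol 3}. Your additional remarks on the Noether interpretation and on why Lemma \ref{vectorfield} does not apply directly are accurate but not part of the paper's argument.
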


\begin{proof} Given a time-independent vector field $X=X^i\frac{\partial}{\partial x^i}$ defined on $M$,
from \eqref{rel 4} we obtain  
$$
\aligned 
\dt\bigl(\tvel(X)\bigr)
& = {d}\nrg(X)+\left\langle\ddt F,{d}F(X)\right\rangle,
\\
& = (\vel H-{d}^\dagger\tvel)\bigl(\tvel(X)\bigr),
\endaligned 
$$
where the second identity follows by multiplying (HMCF) by $X^iF_i={d}F(X)$. 
The desired conclusion is now clear in view of \eqref{evol 3}.
\end{proof}

The momentum conservation law has the following important consequence.

\begin{corollary}[Reduction to normal flows] 
\label{reduc}
Within the class of flows $F:[0,T) \times M\to\real{n+1}$ whose velocity vector is initially normal 
to the hypersurface, i.e. 
$$
F(0,x) = F_0(x), \qquad \disp \dt F(0,x)=f(x)\nu(0,x).
$$
where $F_0 : M \to\real{n+1}$ is a immersion and $f : M \to \real{}$ a function, 
the following two properties hold:

\begin{enumerate} 
\item
the flow $F$ is a solution to (HMCF) if and only if it is a solution of the normal flow equation (HMCF'). 

\item
any solution of (HMCF') satisfies $S=0$ for all $t\in[0,T)$; in other words,
there exists a family of functions $\vel:[0,T)\times M\to\real{}$ such that
$\vel(0,x)=f(x)$ and 
\be \label{normal vel}
\dt F(t,x)=\vel(t,x) \, \nu(t,x).
\ee 
\end{enumerate}
\end{corollary}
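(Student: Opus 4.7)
The plan is to prove part (2) first, since part (1) will follow at once from it combined with the local tangential velocity equation already derived. The central observation is that the right-hand side of (HMCF') carries no tangential acceleration beyond $-d\nrg$, which is precisely the criterion for normality supplied by the earlier proposition in Section~\ref{S-struct} characterizing normal flows.

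For part (2), I would first note that the initial condition $\dt F(0,x) = f(x)\nu(0,x)$ forces $\tvel(0) \equiv 0$, since $\tvel_i = \langle \dt F, F_i\rangle$ and $\nu \perp F_i$. Then, projecting the equation $\ddt F = \nrg H\nu - \nabla\nrg$ of (HMCF') onto the tangent bundle gives $\tacc_i = -\nabla_i\nrg$, that is, $\tacc = -d\nrg$ for all $t \in [0,T)$. At this stage, the criterion from Section~\ref{S-struct} closes the argument: $\tvel(0) = 0$ together with $\tacc = -d\nrg$ at every time force $\tvel(t) \equiv 0$ on $[0,T)$, equivalently $S(t) \equiv 0$. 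The scalar $\sigma(t,x) := \langle \dt F(t,x), \nu(t,x)\rangle$ then satisfies $\sigma(0,\cdot) = f$ and $\dt F = \sigma\,\nu$, as claimed.

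Part (1) is now a matter of substitution together with the tangential conservation law. If $F$ solves (HMCF'), part (2) yields $S \equiv 0$; plugging into the formulas for $\alpha$ and $A^k$ in (HMCF) collapses them to $\alpha = \nrg H$ and $A^k = -\nabla^k\nrg$, so (HMCF) and (HMCF') coincide. Conversely, if $F$ solves (HMCF) with initial velocity normal to $M_0$, the local conservation law $\partial_t\bigl(\tvel(X)\,d\mu_t\bigr) = 0$ for any time-independent $X \in \Gamma(TM)$ forces $\tvel(X)(t)\,d\mu_t \equiv \tvel(X)(0)\,d\mu_0 = 0$; since $d\mu_t > 0$ and $X$ is arbitrary, $\tvel(t) \equiv 0$, hence $S \equiv 0$, and (HMCF) again reduces to (HMCF'). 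There is no serious obstacle: the only subtlety is that the tangential conservation law is stated under the assumption (HMCF), so it can only be invoked in the forward direction of~(1); part~(2) is precisely what bypasses it in the reverse direction.
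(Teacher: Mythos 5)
Your proof is correct, and it takes a somewhat different --- and in fact more complete --- route than the paper's. The paper's own proof rests entirely on the tangential momentum conservation law \eqref{momentum}: it integrates $\tvel(X)\,d\mu_t$ over $M$ to form the total momentum $p(t,X)$, notes that $p(t,X)=p(0,X)=0$ for every compactly supported $X$, concludes $\tvel\equiv 0$, and then ``obtains (HMCF') by inserting $\tvel=0$ into (HMCF)''; the converse implication in part (1) and the claim of part (2) are left implicit. You treat the forward direction of (1) in essentially the same way (your pointwise use of $\dt\bigl(\tvel(X)\,d\mu_t\bigr)=0$ is if anything cleaner, since \eqref{momentum} is already a local identity and there is no need to integrate over $M$ and then quantify over compactly supported $X$). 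For part (2) and the reverse direction of (1), however, you instead invoke the structure equation $\tacc=\dt\tvel-d\nrg$ of Lemma \ref{lemma 2.2} through the characterization of normal flows in Section \ref{S-struct}: the tangential part of the right-hand side of (HMCF') is exactly $-\nabla\nrg$, so $\dt\tvel=0$ pointwise and the initial condition $\tvel(0)=0$ propagates. What this buys you is a genuinely two-sided argument: since the conservation law \eqref{momentum} is derived under the hypothesis that $F$ solves (HMCF), it cannot by itself show that a solution of (HMCF') is normal, and your ODE-type argument closes precisely the gap that the paper's proof glosses over. I see no errors.
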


\begin{proof} For each compactly supported tangent vector field $X\in\Gamma_0(TM)$, 
by defining the total tangential momentum in the direction $X$ as 
\be 
\label{total momentum}
p(t,X):=\int_M\tvel(X) {d}\mu_t,  
\ee
it is clear that 
$$
\tvel\bigl\vert_{t}=0\quad \text{ if and only if } \quad p(t,X)=0, \qquad X\in\Gamma_0(TM).
$$
However, the identity (\ref{momentum}) implies 
$$
p(t,X) = p(0,X) = 0, \qquad   t \in [0,T], 
$$
and
$$
\tvel\bigl\vert_{0}=0 \quad \Rightarrow \quad \tvel\bigl\vert_{t}=0,\quad t\in[0,T]. 
$$
Then, we obtain (HMCF') by inserting $\tvel=0$ into (HMCF).
\end{proof}

We continue our derivation of conservation laws satisfied by the hyperbolic mean curvature flow.

\begin{proposition}[Local energy identity] 
\label{energyid}
Every solution of (HMCF) satisfies the conservation law 
\be 
\label{lnrg}
\dt\,\bigl(\nrg\, {d}\mu\bigr) = {d}^\dagger\bigl((n-\nrg)\tvel\bigr){d}\mu.
\ee 
In particular, if the initial velocity is normal along the hypersurface, then
$e \, d\mu$ is conserved along the flow, 
\be 
\label{slnrg}
\dt\,\bigl(\nrg\, {d}\mu\bigr) = 0
\ee 
and $\nrg\, d\mu$ can be seen as a fixed
volume form on $M$.
\end{proposition}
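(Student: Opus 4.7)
The plan is to compute $\dt(e \, d\mu)$ directly by expanding via the product rule and then substituting the structural identities already at our disposal. Since $e = \tfrac{1}{2}(|\dt F|^2 + n)$, differentiation gives $\dt e = \langle \dt F, \ddt F\rangle$, and using the decomposition $\dt F = \sigma\nu + S^k F_k$ together with the (HMCF) equation $\ddt F = \alpha\nu + A^k F_k$ from Proposition~\ref{eq of motion}, we get
$$
\dt e = \sigma\alpha + S_k A^k.
$$
Inserting the explicit expressions for $\alpha$ and $A^k$, the cross-term $\sigma H|S|^2$ cancels and, using $\sigma^2+|S|^2 = 2e-n$, one finds
$$
\dt e = \sigma e H - (2e-n)\,d^\dagger S - \langle \nabla e, S\rangle.
$$

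Next I would combine this with the evolution of the volume form from Lemma~\ref{lemma 1}, namely $\dt d\mu = (d^\dagger S - \sigma H)\, d\mu$. The product rule then yields
$$
\dt(e\, d\mu) = \bigl[\sigma e H - (2e-n)\, d^\dagger S - \langle \nabla e, S\rangle + e(d^\dagger S - \sigma H)\bigr] d\mu.
$$
The two $\sigma e H$ terms cancel and the $d^\dagger S$ terms collapse, leaving
$$
\dt(e\, d\mu) = \bigl[(n-e)\, d^\dagger S - \langle \nabla e, S\rangle\bigr] d\mu.
$$
Recognizing the bracket as $d^\dagger\bigl((n-e)S\bigr) = (n-e)\,d^\dagger S + \langle \nabla(n-e), S\rangle$ via the Leibniz rule for the divergence then gives \eqref{lnrg}.

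For the second assertion, I would invoke Corollary~\ref{reduc}: since the initial velocity is assumed normal, $S$ vanishes identically along the flow, and therefore the right-hand side of \eqref{lnrg} vanishes for all time, proving \eqref{slnrg}. The only subtle point is the algebraic cancellation in the first step, so I would carry out the substitution of $\alpha$ and $A^k$ carefully; all remaining manipulations are mechanical applications of results already established.
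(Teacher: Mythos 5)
Your proposal is correct and follows essentially the same route as the paper: both compute $\dt\nrg=\langle\dt F,\ddt F\rangle$ by substituting the (HMCF) right-hand side, use $\vel^2+|\tvel|^2=2\nrg-n$, combine with $\dt d\mu=({d}^\dagger\tvel-\vel H)d\mu$, and recognize the divergence ${d}^\dagger\bigl((n-\nrg)\tvel\bigr)$, with the second claim following from Corollary~\ref{reduc}. The only cosmetic difference is that the paper regroups $\dt\nrg$ into $(\vel H-{d}^\dagger\tvel)\nrg+{d}^\dagger\bigl((n-\nrg)\tvel\bigr)$ before invoking the volume-form evolution, whereas you apply the product rule first; the algebra is identical.
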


\begin{proof} We multiply (HMCF) by $dF/dt$ and obtain 
$$
\aligned 
\dt\,\nrg
& = \Big\langle\dt F,\left((\nrg-|\tvel|^2)H-\vel {d}^\dagger\tvel\right)
\nu+\left((\vel H-{d}^\dagger\tvel)\tvel^k-\nabla^k\nrg\right)F_k
\Big\rangle
\\
& =  \vel\bigl((\nrg-|\tvel|^2)H-\vel {d}^\dagger\tvel\bigr)
-\nabla^k eS_k
+(\vel H-{d}^\dagger\tvel)|\tvel|^2
\\
& = \vel H\nrg +(n-2\nrg) {d}^\dagger\tvel-\nabla^k eS_k
\\
& = (\vel H-{d}^\dagger\tvel)\nrg +{d}^\dagger\bigl((n-\nrg)\tvel\bigr). 
\endaligned 
$$ 
Then, the desired identity again follows from \eqref{evol 3}. The second statement follows
from the reduction principle in Corollary \ref{reduc}.
\end{proof}

From the above proposition the following global result follows. 

\begin{corollary}[Global conservation laws] 
If $F:[0,T)\times M\to\real{n+1}$ be a solution of (HMCF')
and $M$ is a compact manifold without boundary, then the total energy $E(t)$ defined by
$$
E_M(t):=\int_M \nrg\, {d}\mu_t
=\frac{1}{2}\int_M\left|\frac{{d} F}{{d}t}\right|^2{d}\mu_t
+\frac{n}{2}\int_M{d}\mu_t 
$$
is conserved: 
$$
E_M(t) = E_M(0), \qquad t\in[0,T). 
$$
\end{corollary}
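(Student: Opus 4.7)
The plan is to derive the global conservation of $E_M$ by integrating the local energy identity from Proposition~\ref{energyid} over the compact manifold $M$. Since $F$ is a solution of (HMCF'), Corollary~\ref{reduc} tells us that the tangential velocity $S$ vanishes identically on $[0,T)$, so the local identity \eqref{slnrg} gives
\begin{equation*}
	\dt\bigl(\nrg\, d\mu_t\bigr)=0
\end{equation*}
pointwise on $M$ for every $t\in[0,T)$. This is the only analytic input we really need.

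First I would fix $t\in[0,T)$ and observe that, since $M$ is compact and $F$ is smooth, the integrand $\nrg\, d\mu_t$ is a smooth time-dependent volume density on $M$ whose $t$-derivative is continuous in $(t,x)$. This regularity justifies differentiation under the integral sign, so that
\begin{equation*}
	\frac{d}{dt}E_M(t)=\frac{d}{dt}\int_M\nrg\, d\mu_t=\int_M\dt\bigl(\nrg\, d\mu_t\bigr)=0.
\end{equation*}
Integrating from $0$ to $t$ then yields $E_M(t)=E_M(0)$.

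As a sanity check, the same conclusion would follow from the more general local identity \eqref{lnrg}: even without reducing to a normal flow, integrating the right-hand side $d^\dagger\bigl((n-\nrg)\tvel\bigr)\,d\mu_t$ over the closed manifold $M$ vanishes by Stokes' theorem, since the divergence of a smooth vector field integrates to zero on a compact manifold without boundary. Thus the corollary is in fact a direct corollary of Proposition~\ref{energyid} alone, combined with $\partial M=\emptyset$; invoking Corollary~\ref{reduc} is a convenient shortcut specific to (HMCF').

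There is essentially no hard step here: the only minor point that needs attention is the interchange of $d/dt$ and $\int_M$, and this is routine under the standing smoothness assumption on $F$. The heavy lifting was already done in establishing the divergence form \eqref{lnrg} of the local energy identity; the present corollary is the clean global consequence one would expect from a Hamiltonian system of this type.
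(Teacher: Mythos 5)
Your proposal is correct and matches the paper's intent exactly: the corollary is stated there as an immediate consequence of Proposition~\ref{energyid}, obtained by integrating the local identity $\dt(\nrg\,d\mu_t)=0$ (or, equivalently, the divergence form \eqref{lnrg} plus Stokes' theorem) over the compact boundaryless manifold $M$. Your added remarks on differentiating under the integral sign and on the alternative route via \eqref{lnrg} are sound but do not change the argument.
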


We also state here a compatibility between derivatives of $F$ in time and in space which follows immediately from the fact 
that the derivatives $d/dt$ and $\nabla$ commute.  

\begin{proposition}[Compatibility relation between time and space derivatives]
Every flow $F:[0,T)\times M\to\real{n+1}$ satisfies the conservation law
$$
{d \over dt} F - \nabla \left( \sigma \, \nu + S^j F_j \right) = 0. 
$$
\end{proposition}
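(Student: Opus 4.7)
The plan is as follows. First, by the very definitions of the normal and tangential velocity components $\sigma=\langle dF/dt,\nu\rangle$ and $S_{j}=\langle dF/dt,F_{j}\rangle$ given in Section~\ref{S-struct}, we have the pointwise decomposition
$$
\frac{d F}{dt}=\sigma\,\nu+S^{j}F_{j}
$$
as an $\real{n+1}$-valued function on $[0,T)\times M$. Hence the content of the proposition is that this decomposition is compatible with spatial differentiation: read componentwise in a local chart $(x^{i})$ on $M$, the claim amounts to
$$
\frac{d F_{i}}{dt}=\nabla_{i}\!\left(\sigma\,\nu+S^{j}F_{j}\right),\qquad i=1,\dots,n.
$$

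Next, since $F_{i}=\partial F/\partial x^{i}$ by the convention recalled in Section~\ref{S-struct}, and since $\nabla_{i}$ applied to an $\real{n+1}$-valued function on $M$ reduces to the ordinary partial derivative $\partial/\partial x^{i}$ (the ambient space being Euclidean and the vector being expressed in its global coordinates, so no Christoffel corrections arise), the right-hand side equals $\partial_{x^{i}}(\partial_{t}F)$, while the left-hand side is $\partial_{t}(\partial_{x^{i}}F)$. The identity therefore reduces to Schwarz's theorem on the equality of mixed partial derivatives applied to the smooth map $F:[0,T)\times M\to\real{n+1}$.

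I do not anticipate any serious obstacle. As the authors' preamble already indicates, the proposition is an immediate consequence of the fact that $d/dt$ and the spatial derivative commute on the smooth map $F$; the only care needed is notational, namely to verify that $\nabla_{i}$ acting on the $\real{n+1}$-valued vector $\sigma\nu+S^{j}F_{j}$ unpacks as claimed. If one prefers a purely geometric verification, one can expand
$$
\nabla_{i}(\sigma\nu)+\nabla_{i}(S^{j}F_{j})
=\bigl(\nabla_{i}\sigma-S^{j}h_{ij}\bigr)\nu+\bigl(\sigma\,h_{i}^{\,k}+\nabla_{i}S^{k}\bigr)F_{k}
$$
using the Gauss--Weingarten relations \eqref{rel 1.1} and \eqref{rel 1.2}, and then recognize the right-hand side as $\partial_{t}F_{i}$ after invoking the structure equations \eqref{rel 1} and \eqref{rel 2} of Lemma~\ref{lemma 2.2}. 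Either route yields the stated compatibility, which may be regarded as the integrability condition ensuring that a prescribed velocity field $\sigma\nu+S^{j}F_{j}$ actually comes from a genuine flow of immersions.
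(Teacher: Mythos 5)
Your main argument coincides with the paper's: the authors give no proof beyond the preceding remark that the identity ``follows immediately from the fact that the derivatives $d/dt$ and $\nabla$ commute,'' and your reduction to Schwarz's theorem -- after observing that $\sigma\nu+S^jF_j$ is by definition the decomposition of $dF/dt$ -- is precisely that argument, so the proposal is correct and essentially identical in approach. One sign correction to your optional geometric verification: with the paper's conventions \eqref{rel 1.1}--\eqref{rel 1.2} one gets
$$
\nabla_i\bigl(\sigma\nu+S^jF_j\bigr)=\bigl(\nabla_i\sigma+S^jh_{ij}\bigr)\nu+\bigl(\nabla_iS^k-\sigma h_i^k\bigr)F_k,
$$
i.e.\ both curvature terms carry the opposite signs to the ones you wrote; with these signs the expression does match $\langle\tfrac{d}{dt}F_i,\nu\rangle\,\nu+g^{jk}\langle\tfrac{d}{dt}F_i,F_j\rangle\,F_k$ as read off from \eqref{rel 1} and \eqref{rel 2}, whereas your displayed version would not.
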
 

\begin{remark}
There are other globally conserved quantities, which are conserved for 
topological reasons and not because of the special nature of our flow. We give two examples.
\begin{enumerate}[1.]
\item
If $Y$ is a divergence free vector field in $\real{n+1}$ and $M$ is closed, then by Stokes
theorem
$$\int\limits_M\langle\nu,Y\rangle d\mu_t=0.$$ 
\item
The quantity
$$\int\limits_M\frac{\langle F-p,\nu\rangle}{|F-p|^{r+1}}\,d\mu_t$$
with $p\in\real{n+1}$ arbitrary (such that $F(t,x)\neq p$ for all $(t,x)\in [0,T)\times M$)
is the degree of the ``winding map" and hence an invariant.
\end{enumerate}
\end{remark}

%==================================================================================================

\section{Evolution of entire graphs}
\label{S-graphs}

In the rest of this article we restrict our attention to the normal hyperbolic mean curvature flow
(HMCF'). 

\subsection*{Local well-posedness result for the normal flow of graphs} 

It is convenient to begin our investigation with the case of graphs. That is, 
in this section we discuss the case, where $F:[0,T) \times M\to\real{n+1}$
satisfies (HMCF') and such that each $M_t:=F(t,M)$ is an entire graph over a flat subspace 
$Z^\perp\subset\real{n+1}$, where $Z^\perp$ denotes the orthogonal complement
of a unit vector $Z\in\real{n+1}$. 

Without loss of generality, we assume that
$F$ is given by 
$$
F(t,x)=\Bigl(x(t),u\bigl(t,x(t)\bigr)\Bigr)
$$ 
for a time-dependent family of height functions
$u:[0,T) \times M\to\real{}$ and a family of diffeomorphisms $x(t)=\Bigl(x^1(t),\dots,x^n(t)\Bigr)$ 
defined on the hyperplane 
$$
M = Z^\perp = \big\{(x^1,\dots,x^{n+1})\in\real{n+1}:x^{n+1}=0\big\}.
$$
In such a situation, a solution of (HMCF') is completely determined by the time-dependent function $u$. 
From now on, we will use the notation $u_{tt}:=\frac{\partial ^2u}{\partial t^2}$ and 
$u_{tj}:=\frac{\partial ^2u}{\partial t\partial x^j}$. 

\begin{theorem}[Local well-posedness for the normal flow of graphs] 
\label{P-graph}~

\begin{enumerate}[1.]
\item
The hyperbolic mean curvature flow for graphs over a flat hypersurface $Z^\perp$ takes the form of the 
following second-order hyperbolic equation: 
\be 
\label{eq graph}
- u_{tt} 
       +\Bigl(e(\sigma) \, \uu gij + \sigma^2(\uu gij-\uu\delta ij)\Bigr) \dd uij 
       + 2 \, \frac{\sigma}{w}\,\uu\delta ij u_iu_{tj} = 0,
\ee
where 
$$w:=\sqrt{1+|Du|^2}=\sqrt{1+\delta^{ij}u_iu_j},\quad\vel = u_t/w$$
and
$$\uu gij=\uu\delta ij- w^{-2}\uu\delta ik\uu\delta jl u_ku_l.$$

\item
The HMCF equation for graphs can be recast in the form of 
a nonlinear hyperbolic system of $n+1$ equations in the unknowns $\sigma$ and $b=Du=(u_i)_{i=1,\dots,n}$
\be 
\label{eq graph2}
\aligned 
& {\del \sigma \over \del t} 
       - {\del \over \del x^j}  \Big( e(\sigma) {b_i \over w} \, \delta^{ij} \Big) = 0,
\\
& {\del b_i \over \del t} - {\del \over \del x^i} (\sigma w) = 0,
\endaligned 
\ee
which, moreover,
 has a conservative form and is
endowed with the mathematical entropy function  
$$
E(\sigma, b) = {1 \over 2} (\sigma^2 +n) \, \sqrt{1 + \delta^{ij} b_i b_j}.  
$$ 
Moreover, this function is strictly convex provided 
$$
|Du|^2 = \delta^{ij} b_i b_j < {1 \over 2}. 
$$

\item
Furthermore, the graph equation \eqref{eq graph}
is locally well-posed in the following sense: given data $(\sigma, Du)$ prescribed
at the initial time $t=0$ and belonging to the Sobolev space $H^s(\R^n)$ (that is, 
whose all their $s$-order derivatives are squared integrable) for some $s > 1+n/2$,
 there exists a classical solution 
$$
u:[0,T) \times \R^n \to \R \quad \text{ in } L^\infty([0,T), H^{s+1}(\R^n)) \cap Lip([0,T), H^s(\R^n)),
$$
defined on a maximal time interval $[0,T)$. 
\end{enumerate}
\end{theorem}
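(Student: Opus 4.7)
The plan is to establish the three parts in sequence, beginning from a direct geometric computation and ending with standard hyperbolic well-posedness.

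For Part~1, I would work with the parametrization $F(t,y)=(y,u(t,y))$ and compute the geometric tensors: $g_{ij}=\delta_{ij}+u_iu_j$, $g^{ij}=\delta^{ij}-u^iu^j/w^2$, $\nu=(-Du,1)/w$, $h_{ij}=u_{ij}/w$, and $H=g^{ij}u_{ij}/w$; the normal velocity is $\sigma=\langle dF/dt,\nu\rangle=u_t/w$. This parametrization is not itself a normal one, so I would introduce the normal reparametrization $\widetilde F(t,x_0)=(x(t,x_0),u(t,x(t,x_0)))$ driven by $\dot x^i=-\sigma u^i/w$, which yields $u_t=\sigma w$ along the flow. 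Equation~\eqref{rel 3} with $S=0$ together with (HMCF') gives $\partial_\tau\sigma=\alpha=eH$, where $\partial_\tau$ denotes differentiation along the normal flow. Using $\partial_\tau=\partial_t|_y-(\sigma/w)u^i\partial_i$ at fixed base point $y$ converts this into $\sigma_t=eH+(\sigma/w)u^i\sigma_i$. Differentiating $u_t=\sigma w$ in $t$ and substituting, together with the identities $wH=g^{ij}u_{ij}$ and $\delta^{ij}-g^{ij}=u^iu^j/w^2$, produces exactly~\eqref{eq graph}.

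For Part~2, set $b_i:=u_i$. The second equation of~\eqref{eq graph2} is the compatibility $\partial_t b_i=\partial_i u_t=\partial_i(\sigma w)$. For the first, I would verify $\partial_j(e u^j/w)=(\sigma/w)u^j\sigma_j+eH$ via $\partial_j e=\sigma\sigma_j$ and $\partial_j(u^j/w)=g^{ij}u_{ij}/w=H$; this matches the expression for $\sigma_t$ obtained in Part~1, establishing the conservative form. The entropy $E=ew=\tfrac12(\sigma^2+n)\sqrt{1+|b|^2}$ is conserved in the sense $\partial_t E+\partial_j F^j=0$ with $F^j=-e\sigma b_i\delta^{ij}$, consistent with Proposition~\ref{energyid} since $d\mu=w\,dx$ for graphs. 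For convexity I would compute $E_{\sigma\sigma}=w$, $E_{\sigma b_i}=\sigma b_i/w$, $E_{b_ib_j}=(e/w)(\delta_{ij}-b_ib_j/w^2)$, decompose a test vector $(s,v)$ into parts parallel and orthogonal to $b$, and observe that the orthogonal part contributes $(e/w)|v_\perp|^2\ge 0$ while the parallel $2\times2$ block has determinant $(e-\sigma^2|b|^2)/w^2$; since $e=(\sigma^2+n)/2$, this is strictly positive whenever $|b|^2<1/2$.

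For Part~3, I would first show that~\eqref{eq graph}, rewritten as $-u_{tt}+2c^j u_{tj}+a^{ij}u_{ij}=0$ with $c^j=(\sigma/w)u^j$ and $a^{ij}=eg^{ij}+\sigma^2(g^{ij}-\delta^{ij})$, is strictly hyperbolic: the characteristic equation $-\tau^2+2\tau(c\cdot\xi)+a^{ij}\xi_i\xi_j=0$ has discriminant $4[(c\cdot\xi)^2+a^{ij}\xi_i\xi_j]=4e\,g^{ij}\xi_i\xi_j$, which is strictly positive for $\xi\ne 0$ because $e>0$ and $g^{ij}$ is positive definite. Equivalently, the first-order system~\eqref{eq graph2} is symmetrizable (via the convex entropy $E$ in the regime $|b|^2<1/2$, or intrinsically from the structure of $a^{ij}$ in general). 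Local well-posedness then follows from standard theory for quasilinear hyperbolic equations (Kato, Majda): for data $(\sigma,Du)|_{t=0}\in H^s(\R^n)$ with $s>1+n/2$, the Sobolev algebra property controls the nonlinear coefficients and a contraction argument in the energy norm yields a unique classical solution $u\in L^\infty([0,T),H^{s+1}(\R^n))\cap\mathrm{Lip}([0,T),H^s(\R^n))$ on a maximal existence interval $[0,T)$; the regularity of $u$ itself is then recovered from $Du=b$ and $u_t=\sigma w$.

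The main obstacle is the derivation in Part~1: the careful bookkeeping between $\partial_t|_y$ and $\partial_\tau$, combined with $u_t=\sigma w$ and the identities relating $g^{ij}$, $\delta^{ij}$, and $u^iu^j/w^2$, must be arranged to recover the exact coefficients of~\eqref{eq graph}. Once the PDE is in this form, the convexity computation in Part~2 and the well-posedness argument in Part~3 proceed along well-established lines.
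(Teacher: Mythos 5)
Your proposal is correct and follows essentially the same route as the paper: derive \eqref{eq graph} from the normal parametrization $\dot x^i=-(\sigma/w)\delta^{ij}u_j$, $u_t=\sigma w$, $\tfrac{d}{dt}\sigma=eH$; pass to the conservative first-order system in $(\sigma,b)$; compute the Hessian of $E$ to get convexity under $|Du|^2<1/2$; and conclude well-posedness from symmetrizable quasilinear hyperbolic theory. The only (equivalent) cosmetic differences are that you check hyperbolicity via the discriminant identity $(c\cdot\xi)^2+a^{ij}\xi_i\xi_j=e\,g^{ij}\xi_i\xi_j$ where the paper computes leading principal minors, and you prove positivity of the Hessian by splitting test vectors parallel/orthogonal to $b$ where the paper uses a term-by-term sign decomposition.
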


\begin{proof} {\sl Step 1: Derivation of the HMCF' graph equation.} We will use that 
\be 
\label{non 1}
\dt F=\left(\dt x,\dt u\right)=\left(\dt x,\pt u+u_i\dt x^i\right)=\vel\nu.
\ee 
On the other hand, since 
$$
F_i=\frac{\partial}{\partial x^i}+u_i\frac{\partial}{\partial u},
$$
we obtain
$$
\nu=\frac{1}{w}\left(-\delta^{ij}u_i\frac{\partial}{\partial x^j}+
\frac{\partial}{\partial u}\right)
$$
with 
$w:=\sqrt{1+|Du|^2}=\sqrt{1+\delta^{ij}u_iu_j}$. 
Inserting this into (\ref{non 1}) gives the two equations
\begin{eqnarray}
\dt x^i&=&-\frac{\vel}{w}\,\delta^{ij}u_j,
\label{non 2}
\\
\pt u+u_i\dt x^i&=&\frac{\vel}{w}.
\label{non 3}
\end{eqnarray}
Inserting (\ref{non 2}) into (\ref{non 3}) yields 
\be 
\label{non 4}
\pt u=\vel w.
\ee 
Differentiating (\ref{non 4}) gives
\begin{eqnarray}
\dt\left(\pt u\right)
&=&\frac{\partial ^2u}{\partial t^2}
+\frac{\partial ^2u}{\partial x^i\partial t} \dt x^i
\label{non 5}
\\
&=&w\dt\vel +\vel\dt w.\nonumber
\end{eqnarray}
Now, we can either compute directly or use equation (\ref{rel 3}), $S=0$, 
$\acc=\nrg H$ to see that
\be \label{non 6}
\dt\vel=\nrg H.
\ee

In turn, (\ref{non 5}) becomes
\begin{eqnarray}
\frac{\partial ^2u}{\partial t^2}
&=&\nrg H w+\vel\pt w
+\left(\vel w_i-\frac{\partial ^2u}{\partial x^i\partial t}\right)\dt x^i
\nonumber\\
&=&\nrg H w+\vel\pt w
+\frac{\vel}{w}\left(\frac{\partial ^2u}{\partial x^i\partial t}-\vel w_i\right)\delta^{ij}u_j. 
\nonumber
\end{eqnarray}
Since $w_i=\frac{1}{w}\,\delta^{kl}u_ku_{li}$ and
$\pt w=\frac{1}{w}\,\delta^{ij}u_j\frac{\partial ^2u}{\partial x^i\partial t}$, 
we obtain
\be \nonumber
\frac{\partial ^2u}{\partial t^2}
=\nrg H w+2\frac{\vel}{w}\,\delta^{ij}\frac{\partial ^2u}{\partial x^i\partial t}
\,u_j-\frac{\vel^2}{w^2}\,\delta^{ki}\delta^{lj}u_{kl}u_iu_j.
\ee 
In addition, from (\ref{non 4}) we have
\be \label{non 7}
\frac{\vel}{w}=\frac{u_t}{1+|Du|^2}
\ee 
and from (\ref{non 1})
\be \label{non 8}
\nrg=\frac{1}{2}\left(\left|\frac{ dF}{dt}\right|^2+n\right)
=\frac{1}{2}\left(\vel^2+n\right)=\frac{1}{2}\left(\frac{u_t^2}{1+|Du|^2}+n\right).
\ee 
Moreover, the second fundamental form is given by
\be \nonumber
\dd hij=\left\langle F_{ij},\nu\right\rangle =\frac{\dd uij}{w}
\ee 
and the induced metric and its inverse by
$$
\dd gij=\dd\delta ij+u_iu_j, \quad\uu gij=\uu\delta ij-\frac{1}{w^2}\uu\delta ik\uu\delta jl u_ku_l, 
$$  
so that
\be \label{non 9}
Hw=\uu gij\dd uij=\left(\uu\delta ij-\frac{1}{w^2}\uu\delta ik\uu\delta jl u_ku_l\right)\dd uij.
\ee 
This leads us to the equation \eqref{eq graph} for graphs over a flat subspace $Z^\perp$.

\noindent{\sl Step 2: Hyperbolicity of the second-order equation.} 
The equation \eqref{eq graph} is hyperbolic if and only if the following matrix
$$\left(A^{\alpha\beta}\right)_{\alpha,\beta=0,\dots,n}:=\begin{pmatrix}
-1&\frac{\sigma}{w}\,u_1&\cdots&\frac{\sigma}{w}\,u_n\\
\frac{\sigma}{w}\,u_1& &\\
\vdots& & \left(A^{ij}\right)_{i,j=1,\dots, n}&\\
\frac{\sigma}{w}\,u_n& & \\
\end{pmatrix}$$
with
$$A^{ij}:=(e+\sigma^2)\uu gij-\sigma^2\uu\delta ij, \quad i,j=1,\dots,n$$
satisfies
$$d_k:=\det\left(\left(A^{\alpha\beta}\right)_{\alpha,\beta=0,\dots, k}\right)<0, \quad
\,\,0\le k\le n.
$$
Namely, fix any point $p\in Z^\perp$, and 
choose an orthonormal basis $e_1,\dots,e_n$ spanning $Z^\perp$ such that
at $p$ 
$$
Du(p)=u_1e_1, \quad u_i=0, \quad\,\, 2\le i\le n.
$$
In this basis the matrix $A$ at $p$ takes the (symmetric) form
$$\left(A^{\alpha\beta}\right)_{\alpha,\beta=0,\dots,n}=\begin{pmatrix}
-1                    & \frac{\sigma}{w}\,u_1         &0      & 0       & \cdots  & 0      \\[2mm]
\frac{\sigma}{w}\,u_1 & \frac{e-|Du|^2\sigma^2}{w^2}  &0      & 0       & \cdots  & 0      \\[2mm]
0                     & 0                             &e      & 0       & \cdots  & 0      \\[0mm]
0                     & 0                             &0      & \ddots  & \ddots  & \vdots \\[0mm]
\vdots                & \vdots                        &\vdots & \ddots  & \ddots  & 0      \\[0mm]
0                     & 0                             &0      & \cdots  & 0       & e      \\[2mm]
\end{pmatrix}$$
This gives in view of $u_1^2=|Du|^2$
$$
d_0=-1,\quad \quad d_k=-\,\frac{e^k}{w^2}<0\,\quad\quad k\ge 1.
$$
Since $p$ was arbitrary, this proves that equation (\ref{eq graph}) is indeed hyperbolic.

\noindent{\sl Step 3: First-order formulation.} By introducing the first-order variables 
$\sigma = u_t / w$ and $b_i:= u_i$, and regarding $w$ as a function of $b=(b_i)$,   
we can rewrite \eqref{eq graph} in the form
$$
\aligned 
& {\del \sigma \over \del t} 
       - \sigma {b_i \over w(b)} \, \delta^{ij} {\del \sigma\over \del x^j}  
       -  e(\sigma) \, {g^{ij}(b) \over w(b)}
       {\del b_i\over \del x^j}  
       = 0,
\\
& {\del b_i \over \del t} - w(b) \, {\del \sigma \over \del x^i} - \sigma {u_l \over w(b)} \delta^{kl} {\del b_k \over \del x^i} = 0. 
\endaligned 
$$
This is a first order nonlinear system in $\sigma,b$, which can be checked to take the desired conservative form \eqref{eq graph2}. 

Considering the function $E=(\sigma,b) $ introduced in the theorem, we also compute 
$$
\aligned 
& {\del^2 E \over \del \sigma^2} = w, 
\\ 
& {\del^2 E \over \del \sigma \del b_k} = w^{-1} \sigma b_i, 
\\
& {\del^2 E \over \del b_j \del b_k} = w^{-1} {1 \over 2} (\sigma^2+n) \left( \delta_{jk} - {b_j b_k \over w^2} \right), 
\endaligned 
$$
which is a non-negative matrix, since for all scalar $Y$ and vector $X=(X^j)$ (not both zero) 
the Hessian evaluated at $(Y,X)$ equals    
$$
w \Big( Y + \sigma {b_j \over w^2} X^j \Big)^2 - {\sigma^2 \over w^3} \Big(b_i X^j \Big)^2 
+  w^{-1} {1 \over 2} (\sigma^2+n) \left( X^j X^k \delta_{jk} - {X^j b_j X^k b_k \over w^2} \right), 
$$
or equivalently 
$$ 
\aligned 
& w \Big( Y + \sigma {b_j \over w^2} X^j \Big)^2 
+  w^{-3} {1 \over 2} \sigma^2 \left( (X^j X^k \delta_{jk})  - 2 \Big(b_i X^j \Big)^2 \right)
\\
& +  w^{-3} {n \over 2} \, X^j X^k \delta_{jk} 
 +  w^{-3} {1 \over 2} (\sigma^2 +n) \left( (X^j X^k \delta_{jk}) ( b_i b_l \delta^{il})  - (X^j b_j)^2 \right).
\endaligned 
$$
This expression is positive if and only if we impose the restriction $b_i b_j \delta^{ij}< 1/2$.  (Each term 
in the above decomposition has a positive sign.)

Since the equation \eqref{eq graph2} has the form of a system of conservation laws and admits a convex entropy, 
it can be put in a symmetric hyperbolic form. Indeed, introducing the variables 
$$
a := u_t = \sigma w(b),  \qquad   c_i:= e(\sigma) {b_i \over w(b)}, 
$$
which is nothing but the gradient of $E$, we obtain 
\be 
\label{eq graph3}
\aligned 
& {\del \sigma \over \del t} 
       - {\del \over \del x^j}  \Big( c_i\, \delta^{ij} \Big) = 0,
\\
& {\del b_i \over \del t} - {\del \over \del x^i} (\sigma w) = 0.
\endaligned 
\ee
Then, by expressing (implicitly) $\sigma$ and $b_i$ as functions $\overline \sigma, \overline b_i$ 
of the new unknowns $a$ and $c_i$, one can check that 
the above system is symmetric, in the sense that
$$
{\del \overline \sigma \over \del c^i}(a,c) = {\del \overline b_i \over \del a}(a,c).
$$
In turn, the system is locally well-posed in $H^s$ with $s >1+n/2$. 
\end{proof}

%---------------------------------------------------------------------------------------------------------

\subsection*{Weak solutions to the normal flow of graphs}

To define weak solutions we rely on the conservative form exhibited in \eqref{eq graph2}. 

\begin{definition} A Lipschitz continuous map $u: [0,T) \times \R^n \to \R$ is called
a {\sl weak solution} to the HMCF' equation for graphs \eqref{eq graph} if and only if for every 
test-function $\theta : [0,T) \times \R^n \to \R$ (that is, compactly supported $C^\infty$ functions) 
$$
\int_{(0,T)} \int_{\R^n} \left( 
{\del u \over \del t}  {\del \theta \over \del t} 
- {1 \over 2} \left( \left(w(Du)\right)^{-2}  \, \left|{\del u \over \del t}\right|^2 +n \right) 
 {\del u \over \del x^i}  {\del \theta \over \del x^j} 
\delta^{ij} \right) \left( w(Du) \right)^{-1} 
\, dx dt = 0. 
$$
where $w(Du)^2 = \left( 1 + \delta^{ij} {\del u \over \del x^i}  {\del u \over \del x^j}  \right)$. 
It is called an {\sl entropy solution} if, moreover, the inequality 
$$
\int_{(0,T)} \int_{\R^n} {1 \over 2} \left( \left(w(Du)\right)^{-2}  \, \left|{\del u \over \del t}\right|^2 +n \right)  \,  w(Du)  \, \theta \, dx dt \leq 0
$$
for every non-negative test-function $\theta$. 
\end{definition} 

We have the following uniqueness result, which relies on the fact that the energy is convex in the variables 
$\sigma,b$.

\begin{theorem}[Uniqueness of classical solutions within the class of entropy solutions]
 Given $\eps>0$, there exists a constant $C_\eps$ such that the following property holds.
Let $u$ be a Lipschitz continuous entropy solution and $u'$ be a solution of class $\Ccal^2$, 
both being defined up to some time $T>0$ and satisfying the uniform hyperbolicity
condition
$$
|Du|^2 < {1-\eps \over 2}, \qquad |Du'|^2 < {1-\eps \over 2}. 
$$
Then, provided the Lipschitz norm of $u$ if less than $C_\eps$
and the $C^2$ norm of $u'$ is less than $C_\eps$ then for all times $t \in [0,T)$ 
$$
\aligned 
& \int_{\R^n} \left(\left|  {\del u \over \del t} - {\del u' \over \del t} \right|^2 +  \left| Du-Du' \right|^2  \right)(t,x) \, dx
\\
& \leq C_\eps e^{C_\eps \, t} \, 
\int_{\R^n} \left(\left|  {\del u \over \del t} - {\del u' \over \del t} \right|^2 +  \left| Du-Du' \right|^2  \right)(t,x) \, dx.
\endaligned 
$$
\end{theorem}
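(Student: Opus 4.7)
The plan is to apply the relative entropy method of Dafermos--DiPerna to the conservative system \eqref{eq graph2} derived in Theorem~\ref{P-graph}. Write $U:=(\sigma,b)$ and $U':=(\sigma',b')$ for the vectors of primary unknowns associated with $u$ and $u'$ respectively, so that the flow takes the abstract form $\partial_t U+\partial_j\mathcal F^j(U)=0$, endowed with the mathematical entropy $E(U)$ and an associated entropy flux $Q^j$ characterized by $\nabla Q^j=\nabla E\,\nabla\mathcal F^j$. The geometric assumption $|Du|^2,|Du'|^2<(1-\eps)/2$ places $U$ and $U'$ in the region where, by the Hessian computation of Theorem~\ref{P-graph}(2), $D^2 E$ is uniformly positive definite; in particular the relative entropy
$$
\eta(U\vert U'):=E(U)-E(U')-\nabla E(U')\cdot(U-U')
$$
satisfies $c_\eps|U-U'|^2\le\eta(U\vert U')\le C_\eps|U-U'|^2$ uniformly on this region.

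The first step is the pointwise relative entropy identity. Combining the entropy inequality $\partial_t E(U)+\partial_j Q^j(U)\le0$ available for the weak entropy solution $u$ with the entropy equality $\partial_t E(U')+\partial_j Q^j(U')=0$ that holds for the classical solution $u'$, together with the conservation laws \eqref{eq graph2} tested against the smooth multiplier $\nabla E(U')$, produces
$$
\partial_t\eta(U\vert U')+\partial_j q^j(U\vert U')\le -\partial_t\nabla E(U')\cdot(U-U')-\partial_j\nabla E(U')\cdot\bigl(\mathcal F^j(U)-\mathcal F^j(U')\bigr),
$$
where $q^j(U\vert U'):=Q^j(U)-Q^j(U')-\nabla E(U')\cdot(\mathcal F^j(U)-\mathcal F^j(U'))$ is the relative flux. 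The right-hand side is quadratic in $U-U'$ modulo derivatives of the reference field $U'$, because each $\mathcal F^j$ is smooth on the hyperbolic region and $\mathcal F^j(U)-\mathcal F^j(U')-\nabla\mathcal F^j(U')(U-U')=O(|U-U'|^2)$.

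The second step integrates this inequality over $\R^n$. To dispose of the $\partial_j q^j$ term despite the absence of spatial decay of $u,u'$, I would localize against a cutoff supported in a backward light cone whose slope is chosen larger than the maximal characteristic speed (which is controlled by the uniform bound on the Lipschitz/$C^2$ norms and hence by $C_\eps$); finite speed of propagation for symmetric hyperbolic systems with bounded coefficients makes the flux contribution non-positive, so it can be discarded. Using the $C^2$ bound on $u'$ to control $\|\nabla U'\|_{L^\infty}$ and the equivalence $\eta\sim|U-U'|^2$, we obtain
$$
\frac d{dt}\int_{\R^n}\eta(U\vert U')\,dx\le C_\eps\int_{\R^n}\eta(U\vert U')\,dx,
$$
and Gronwall yields the exponential estimate. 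A final bookkeeping step translates $|U-U'|^2$ back into $|\partial_t u-\partial_t u'|^2+|Du-Du'|^2$ via the smooth change of variables $(\sigma,b)=(u_t/w(Du),Du)$, which is bi-Lipschitz on the hyperbolic region with constants depending only on $\eps$ and the assumed Lipschitz norm bound $C_\eps$.

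The principal difficulty is the second step, namely the rigorous justification of the integrated relative entropy inequality when $u$ is merely Lipschitz: all manipulations must be read in the distributional sense, which forces one to approximate $\nabla E(U')$ and work with commutator-type terms that must be shown to vanish in the limit. This is handled by standard regularization (the smoothness of $u'$ is essential here: the multiplier $\nabla E(U')$ is $C^1$, so test-function manipulations are legitimate), combined with Kru\v{z}kov-type density arguments. Once this framework is in place the estimate is routine, the smallness hypothesis on the norms of $u$ and $u'$ serving only to keep all relevant quantities strictly inside the convexity region where the Hessian bounds hold uniformly.
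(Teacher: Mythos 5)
Your proposal follows essentially the same route as the paper: the authors also introduce the relative entropy $Q = E(\sigma,b) - E(\sigma',b') - DE(\sigma',b')\cdot((\sigma,b)-(\sigma',b'))$, use the uniform convexity of $E$ on the region $|Du|^2<(1-\eps)/2$ to show $Q$ is comparable to $|u_t-u_t'|^2+|Du-Du'|^2$, derive the Gronwall inequality $\frac{d}{dt}\int Q\,dx \le C_\eps^2\int Q\,dx$ from the entropy inequality for $u$ and the classical equation for $u'$, and conclude by Gronwall. Your write-up actually supplies more of the details (the relative flux, the localization, the regularization issues) than the paper's very terse sketch, but the underlying argument is identical.
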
 

\begin{proof} Under the assumptions made in the theorem, consider the expression 
$$
\aligned
Q & = Q(t,x) = E(\sigma, b) - E(\sigma', b') - {D E \over D(\sigma,u)} (\sigma', b') ( (\sigma,u) - (\sigma',u')) 
\\
& = \Big( (1 + |Du|^2)^{-1} |u_t|^2 +n\Big) \sqrt{1 + |Du|^2} 
           - \Big( (1 + |Du'|^2)^{-1} |u_t'|^2 +n\Big) \sqrt{1 + |Du'|^2} 
\endaligned 
$$ 
and note that for some constant $C_\eps^{1}>0$ 
$$
{1 \over C_\eps^{1}} \, \big( |u_t - u_t'|^2 + |Du - Du'|^2 \big) 
\leq Q \leq 
C_\eps^{1} \, \big( |u_t - u_t'|^2 + |Du - Du'|^2 \big).   
$$
On the other hand, a direct calculation using the fact that $u$ is an entropy solution
and $u'$ is a classical solution yields the inequality 
$$
{d \over d t} \int_{\R^n} Q(t,x) \, dx \leq C_\eps^{2} \, \int_{\R^n} Q(t,x) \, dx,
$$
where the constant $C_\eps^{2}$ depends upon up to second-order derivatives of the solution $u'$. 
The conclusion follows from Gronwall's inequality and the fact that $Q$ is comparable with
$|u_t - u_t'|^2 + |Du - Du'|^2$. 
\end{proof}

%---------------------------------------------------------------------------------------------------------

\subsection*{Global existence of one-dimensional graphs}

In view of Theorem~\ref{P-graph}, the hyperbolic mean curvature flow equation in the case $n=1$ reads 
\begin{eqnarray}
\label{non 10}
u_{tt}
&=&\frac{1}{2}\left(\frac{u_t^2}{1+u_x^2}+1\right)\frac{u_{xx}}{1+u_x^2}
+2\frac{u_xu_t\dd uxt}{1+u_x^2}-\frac{u_x^2u_t^2u_{xx}}{(1+u_x^2)^2}\nonumber\\
&=&\frac{u_{xx}}{2(1+u_x^2)^2}\left(u_t^2+1+u_x^2-2u_x^2u_t^2\right)
+2\frac{u_xu_t\dd uxt}{1+u_x^2}.\nonumber
\end{eqnarray}
Therefore, we find 
\be 
\label{non 11}
u_{tt}=\frac{1+u_x^2+u_t^2-2u_x^2u_t^2}{2(1+u_x^2)^2}\,u_{xx}
+2\frac{u_xu_t}{1+u_x^2}\,\dd uxt.
\ee 
This equation written on the real line with initial data 
$$
u(x,0) = u_0(x), \qquad u_t(0,x) = u_1(x), \qquad x \in \R, 
$$
describes the vibrations of an infinitely long string, 
with initial position $u_0$ and initial velocity $u_1$.  

Relying on the definition of weak solutions introduced earlier for general dimensions, 
we now prove: 

\begin{theorem} [Global existence of weak solutions] 
There exists a constant $\delta_0>0$ such that given any initial data $u_0, u_1: \R \to \R$ such that 
$$
TV(u_{0,x}) + TV(u_1) < \delta_0,   
$$
the initial-value problem for the equation (HMCF') in the second-order form \eqref{non 11}
admits an entropy solution $u=u(t,x)$ such that the functions $u_x$ and $u_x$ have bounded 
variation in space, uniformly in time.   
\end{theorem}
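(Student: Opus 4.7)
The plan is to reduce the second-order equation \eqref{non 11} to the $2\times 2$ first-order conservative system given by \eqref{eq graph2} with $n=1$, and then to invoke the classical theory of systems of conservation laws with small BV data, going back to Glimm. Specifically, with $\sigma := u_t/w(u_x)$ and $b := u_x$ (where $w(b) = \sqrt{1+b^2}$ and $e(\sigma) = (\sigma^2 + 1)/2$), the system \eqref{eq graph2} in one space dimension reads
\begin{equation*}
\sigma_t - \bigl( e(\sigma) \, b / w(b) \bigr)_x = 0, \qquad b_t - \bigl( \sigma \, w(b) \bigr)_x = 0,
\end{equation*}
and by Step~2 of the proof of Theorem~\ref{P-graph} it carries the strictly convex entropy $E(\sigma,b) = \tfrac12 (\sigma^2 + 1) \sqrt{1+b^2}$ on the open set $\{b^2 < 1/2\}$.

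First I will check the structural hypotheses of Glimm's theorem. A direct computation of the flux Jacobian yields the eigenvalues
\begin{equation*}
\lambda_{\pm}(\sigma, b) = - \frac{\sigma b}{w(b)} \pm \frac{\sqrt{e(\sigma)}}{w(b)},
\end{equation*}
so $\lambda_\pm(0,0) = \pm 1/\sqrt{2}$ and the system is strictly hyperbolic in some open neighborhood $\mathcal V$ of the origin. Computing $\nabla \lambda_\pm \cdot r_\pm$ at $(0,0)$, where $r_\pm$ are the corresponding right eigenvectors, shows that both characteristic fields are genuinely nonlinear at the origin; by continuity this persists on a possibly smaller $\mathcal V$.

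Next I will choose $\delta_0>0$ small enough so that for any data satisfying $TV(u_{0,x}) + TV(u_1) < \delta_0$ the initial state $(\sigma_0, b_0) = (u_1/w(u_{0,x}), u_{0,x})$ has BV norm small enough for Glimm's scheme — equivalently, for a front-tracking scheme using $E$ to select admissible shocks — to produce a global-in-time solution $(\sigma, b) \in L^{\infty}([0,\infty); BV(\R))$ that remains inside $\mathcal V \cap \{b^2 < 1/2\}$ for all $t \ge 0$. I will then recover $u$ by setting
\begin{equation*}
u(t,x) := u_0(x) + \int_0^t \sigma(s,x)\,w(b(s,x))\,ds.
\end{equation*}
Reading the second equation of the system distributionally gives $(u_x - b)_t = 0$, hence $u_x = b$ by the initial condition, so $u_t = \sigma\, w(u_x)$. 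In particular $u_t$ and $u_x$ have uniform spatial bounded variation. Substituting these identities into the weak form of the first-order system reproduces the weak form of \eqref{non 11}, and the entropy dissipation for $E$ delivers the entropy inequality in the sense of the definition above.

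The hardest point will be the combination of (i) confirming genuine nonlinearity of both fields uniformly near the origin, and (ii) calibrating $\delta_0$ so that Glimm's a priori $L^\infty$--$BV$ bounds keep the reconstructed state away from the locus $\{b^2 = 1/2\}$ and from any degeneracy of strict hyperbolicity for all times; without this, the strict convexity of $E$ and the strict hyperbolicity of the system could fail. Once these quantitative points are secured, both the appeal to Glimm's theorem and the reconstruction of $u$ from $(\sigma, b)$ are essentially routine.
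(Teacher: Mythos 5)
There is a genuine gap, and it sits exactly where you flagged the ``hardest point.'' Your reduction to the $2\times 2$ conservative system in $(\sigma,b)=(u_t/w,\,u_x)$ is the same as the paper's (the paper calls the first variable $a$), and the reconstruction of $u$ from $(\sigma,b)$ is fine. But your claim that ``computing $\nabla\lambda_\pm\cdot r_\pm$ at $(0,0)$ shows that both characteristic fields are genuinely nonlinear at the origin'' is false. The paper carries out precisely this computation and finds
$$
\langle D\lambda_\pm,\mu_\pm\rangle=\frac{3a}{2(1+b^2)^{3/2}}=\frac{3u_t}{2w^4},
$$
which \emph{vanishes} on the whole hypersurface $\{a=0\}=\{u_t=0\}$, in particular at the origin. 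So genuine nonlinearity fails exactly at the constant state around which your small-BV perturbation lives (e.g.\ any data with $u_1\to 0$ at infinity), and no shrinking of the neighborhood $\mathcal V$ can restore it ``by continuity.'' Since the fields are neither genuinely nonlinear nor linearly degenerate near that state, the classical Glimm/front-tracking theory you invoke does not apply, and this is not a technicality one can calibrate $\delta_0$ around: the degeneracy locus passes through the base state itself.

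The paper's way out is to observe that the loss of genuine nonlinearity occurs only on the hypersurface $\{u_t=0\}$ and that this degeneracy is itself non-degenerate, in the sense that $\langle D\langle D\lambda_\pm,\mu_\pm\rangle,\mu_\pm\rangle\neq 0$ along $\{u_t=0\}$ (the first directional derivative of $\nabla\lambda_\pm\cdot r_\pm$ along the eigenvector does not vanish there). This puts the system within the scope of the Iguchi--LeFloch existence theorem for hyperbolic systems with general (not genuinely nonlinear) flux-functions, which is the result actually cited to produce the global small-BV entropy solution. To repair your argument you would need to replace the appeal to Glimm's theorem by this (or an equivalent) theorem for fluxes with non-degenerate loss of genuine nonlinearity, and verify the second-order non-degeneracy condition above. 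On the positive side, strict hyperbolicity is not an issue in one space dimension: the paper checks that the discriminant equals $e/w^2>0$ everywhere, so that part of your worry about staying inside $\mathcal V$ is unnecessary.
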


\begin{proof} 
Since
\be \nonumber
\text{det}\begin{pmatrix}-1& &\frac{u_xu_t}{1+u_x^2}\\
\frac{u_xu_t}{1+u_x^2}& & \frac{1+u_x^2+u_t^2-2u_x^2u_t^2}{2(1+u_x^2)^2}\end{pmatrix}
=-\frac{1+u_x^2+u_t^2}{2(1+u_x^2)^2}=-\,\frac{e}{w^2}<0, 
\ee 
this equation is always hyperbolic. 
We introduce the variables
$$
a:=\frac{u_t}{w}, \qquad b:=u_x.
$$
These are both conservative quantities. For $b$ this is trivial since
\be 
\label{non 12}
b_t = u_{xt} = (u_t)_x = (a\sqrt{1+b^2})_x.
\ee 
For $a$ we may either see this directly from the conservation law (\ref{cont}) with 
$Y=\frac{\partial}{\partial u}$ or we may compute
$$
\aligned 
a_t 
& = \frac{u_{tt}}{w}-\frac{u_xu_tu_{xt}}{w^3}
\\
& = \frac{1+u_x^2+u_t^2-2u_x^2u_t^2}{2w^5}\,u_{xx}+2\frac{u_xu_t}{w^3}\,u_{xt}-
\frac{u_xu_tu_{xt}}{w^3}
\\
& =  \frac{1+u_x^2+u_t^2-2u_x^2u_t^2}{2w^5}\,u_{xx}+\frac{u_x\left(u_t^2\right)_x}{2w^3}
\\
& = \frac{1+u_x^2+u_t^2-2u_x^2u_t^2}{2w^5}\,u_{xx}
+\left(\frac{u_xu_t^2}{2w^3}\right)_x-u_t^2\left(\frac{u_x}{2w^3}\right)_x, 
\endaligned 
$$
thus 
$$
\aligned 
a_t & =  \frac{u_{xx}}{2w^3}+\left(\frac{u_xu_t^2}{2w^3}\right)_x
\\
& = \left(\frac{u_x}{2w}+\frac{u_xu_t^2}{2w^3}\right)_x
 =\left(\frac{u_x(1+u_x^2+u_t^2)}{2w^3}\right)_x
 = \left(\frac{\left(1+a^2\right)b}{2\sqrt{1+b^2}}\right)_x.
\endaligned 
$$

This can be rewritten in the form
$$
a_t=\frac{ab}{\sqrt{1+b^2}}\,a_x+\frac{1+a^2}{2(1+b^2)^{3/2}}\,b_x
$$
and equation (\ref{non 12}) is equivalent to
$$
b_t=\sqrt{1+b^2}\,a_x+\frac{ab}{\sqrt{1+b^2}}\,b_x.
$$
Combining the last two equations gives the system
\be 
\label{non 14}
\begin{pmatrix}a\\ b\end{pmatrix}_t
- A(a,b) \, \begin{pmatrix}a\\b\end{pmatrix}_x
=0,
\ee 
where  
\be 
\label{non 15}
A(a,b) =
\frac{1}{\sqrt{1+b^2}}\begin{pmatrix}ab& & \displaystyle\frac{1+a^2}{2(1+b^2)}\\[2mm]
1+b^2& & ab
\end{pmatrix}. 
\ee 

Recall that a necessary and sufficient condition for a quantity $\eta=\eta(a,b)$ 
to be a conserved quantity is for 
\be 
\nonumber
\begin{pmatrix}
\eta_{aa}& & \eta_{ab}\\[2mm]
\eta_{ab}& & \eta_{bb}
\end{pmatrix}\, A
\ee 
to be a symmetric matrix. For the nonlinear hyperbolic system under consideration this gives
\be \nonumber
ab \, \eta_{ab}+(1+b^2) \, \eta_{bb}=\frac{1+a^2}{2(1+b^2)} \, \eta_{aa}+ab \, \eta_{ab},
\ee 
that is, 
\be 
\label{non 16}
\eta_{aa} = \frac{2(1+b^2)^2}{1+a^2}\,\eta_{bb}.
\ee 
This is clearly a linear hyperbolic equation. 
From this we compute for example that $ab=\frac{u_xu_t}{w}$ is a conserved quantity,
a fact which also follows directly from (\ref{cont}) with $Y=\frac{\partial}{\partial x}$.

As one easily computes, the two eigenvalues of $A$ are given by
\be \label{non 17}
\lambda_{\pm}=\frac{1}{\sqrt{1+b^2}}\left(ab\pm\sqrt{\frac{1+a^2}{2}}\right),
\ee 
while the eigenspaces are spanned by the vectors 
\be 
\label{non 18}
\mu_\pm=\begin{pmatrix}\pm\frac{\sqrt{1+a^2}}{\sqrt{2}(1+b^2)}\\ 1
\end{pmatrix}.
\ee 
Regarded as a function of $a$ and $b$ the gradient of $\lambda_\pm$ equals 
\be 
\label{non 19}
D\lambda_\pm=\frac{1}{\sqrt{1+b^2}}\begin{pmatrix}b\pm\frac{a}{2\sqrt{\frac{1+a^2}{2}}}
\\
a-\frac{b}{1+b^2}\left(ab\pm\sqrt{\frac{1+a^2}{2}}\right)\end{pmatrix},
\ee 
hence 
$$
\aligned 
& \langle D\lambda_\pm,\mu_\pm\rangle
\\
& = \frac{1}{\sqrt{1+b^2}} \Bigg(\pm\frac{\sqrt{1+a^2}}{\sqrt{2}(1+b^2)}
       \Big( b\pm\frac{a}{2\sqrt{\frac{1+a^2}{2}}} \Big)
       + \frac{a}{1+b^2}\mp\frac{b}{1+b^2}\sqrt{\frac{1+a^2}{2}}\Bigg) 
\\
& = \frac{3a}{2(1+b^2)^{3/2}}=\frac{3u_t}{2w^4}.
\endaligned 
$$ 
Hence, the hyperbolic 
system under consideration is {\sl not} genuinely nonlinear in the sense of Lax. 

However, we observe that the genuine nonlinearity is lost on a 
hypersurface (that is, $\big\{ u_t =0 \big\}$) which is itself non-degenerate, in the sense that 
$$
\langle D \langle D\lambda_\pm,\mu_\pm\rangle, \mu_\pm \rangle 
\neq 0 \quad \text{ along the hypersurface } u_t =0. 
$$
Therefore, we are in a position to apply to the system of conservation laws 
$$
\aligned 
& a_t - \left(\frac{\left(1+a^2\right)b}{2\sqrt{1+b^2}}\right)_x = 0, 
\\ 
& b_t - (a\sqrt{1+b^2})_x=0, 
\endaligned
$$  
the global existence theorem in Iguchi and LeFloch \cite{iguchilefloch}, 
which provides the existence of a solution with bounded variation when the initial data have small bounded variation.
\end{proof}

%===============================================================================================

\section{Local-in-time existence}
\label{S-local} 

We now turn to the discussion of the existence of solutions to the normal
hyperbolic mean curvature flow (HMCF'), where $M_t$ cannot necessarily be written as an entire graph
over a flat subspace. We use standard notation and, in particular, denote by 
$H_\loc^s(M)$ the Sobolev space of 
locally squared integrable (tensor-valued) maps defined on $M$ 
whose all $s$-order derivatives (in one local chart and in the distributional sense) 
are also locally squared integrable.  

\begin{theorem} 
\label{exist}

Let $M$ be a smooth, orientable compact manifold with dimension $n$, 
and $\overline F : M \to \R^{n+1}$ be an immersion of $M$ in the Euclidian space.  
Given a (scalar) normal velocity field $\overline \sigma : M \to \R$ in the Sobolev space $H^{s+1}(M)$ 
with $s >1+n/2$, there exists a unique flow $F:[0,T)\times M \to \R^{n+1}$ 
in the space $L^\infty([0,T), H^{s+1}(M)) \cap Lip([0,T), H^s(M))$ 
which is defined on some maximal time interval and 
satisfies the normal hyperbolic mean curvature flow equation (HMCF'), 
together with the initial conditions
$$
F(0) = \overline F, \qquad {d F \over dt}(0) = \overline \sigma \, \nu(0). 
$$
\end{theorem}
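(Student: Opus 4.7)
The strategy is to recast (HMCF') as a quasilinear symmetric hyperbolic system and then apply the classical local well-posedness theory for such systems, following the same philosophy as in the proof of Theorem~\ref{P-graph}. Invoking the contracted Gauss identity $\lap F = H\nu$ used in the proof of Proposition~\ref{eq of motion}, the normal equation rewrites intrinsically as
\[
\ddt F = \nrg \, \lap_{g(F)} F - \nabla \nrg,
\]
where $\lap_{g(F)}$ is the Laplace--Beltrami operator of the induced metric $g_{ij} = \langle F_i, F_j \rangle$ and $\nrg = \tfrac{1}{2}(\sigma^2 + n) \ge n/2 > 0$. In any local chart on $M$ this reads
\[
\frac{\partial^2 F}{\partial t^2} - \nrg \, g^{ij}(\partial F) \, \frac{\partial^2 F}{\partial x^i \partial x^j} = \Phi\bigl(F, \partial_t F, \partial F\bigr)
\]
for a smooth algebraic expression $\Phi$. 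The principal symbol $-\tau^2 + \nrg\, g^{ij}\xi_i\xi_j$ is Lorentzian because $\nrg > 0$ and $(g^{ij})$ is positive definite, so the equation is strictly hyperbolic at every immersion.

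To fit the framework of first-order symmetric hyperbolic systems I would introduce the auxiliary unknowns $V := \partial_t F$ and $W_i := \partial_i F$, and rewrite the equation as
\[
\partial_t F = V, \qquad \partial_t W_i = \partial_i V, \qquad \partial_t V - \nrg \, g^{ij}(W) \, \partial_i W_j = \widetilde\Phi(F, V, W).
\]
Multiplying the $W$-block by the symmetric positive definite matrix $\nrg\, g^{ij}(W)$ and leaving the $V$-block untouched provides a symmetrizer for this system; the resulting matrix is uniformly positive definite on an $H^s$-neighbourhood of the initial data, since $\overline F$ is an immersion (so $g_{ij}(\overline F)$ is uniformly positive) and $H^s \hookrightarrow C^1$ for $s > 1+n/2$. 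The compatibility constraint $W_i = \partial_i F$ is preserved by the evolution because the system is compatible with spatial differentiation of $\partial_t F = V$, so the first-order system is equivalent to the original second-order equation.

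With the symmetric hyperbolic structure in hand, the existence and uniqueness statement follows from the classical local-in-time theorem for quasilinear symmetric hyperbolic systems, applied via a finite partition of unity subordinate to a coordinate atlas of the compact manifold $M$. That $\partial_t F$ remains normal to $M_t$, hence that the flow produced by the scheme is a genuine solution of (HMCF') and not merely of (HMCF), follows at once from the conservation of tangential momentum established in Corollary~\ref{reduc}, since by assumption $(dF/dt)(0) = \overline\sigma \, \nu(0)$ is normal. The main obstacle I anticipate is the rigorous derivation of the closed energy inequality: because the principal coefficients $\nrg\, g^{ij}$ depend on $(V, W)$ through nonlinear algebraic expressions, one must control $V$ and $W$ simultaneously in $H^s$, using the standard commutator analysis for quasilinear wave equations together with Moser-type estimates, and must also recover the short-time persistence of the immersion property from continuous dependence of the solution on the initial data.
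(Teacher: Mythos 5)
Your reduction to a symmetric hyperbolic system breaks down at the very first step, where you claim that in a local chart (HMCF') reads $\partial_t^2 F - \nrg\, g^{ij}(\partial F)\,\partial_i\partial_j F = \Phi(F,\partial_t F,\partial F)$ with $\Phi$ of lower order. This is not the case, for two reasons. First, $\lap_{g}F = g^{ij}\bigl(\partial_i\partial_j F - \Gamma^k_{ij}\partial_k F\bigr)$ and, for an induced metric, $\Gamma^k_{ij}=g^{kl}\langle \partial_i\partial_j F, F_l\rangle$ is itself second order in $F$; consequently $\lap_g F$ is the \emph{normal projection} of $g^{ij}\partial_i\partial_j F$, so the spatial principal symbol is $\nrg\, g^{ij}\xi_i\xi_j\,\nu\otimes\nu$, a rank-one operator on $\real{n+1}$, not $\nrg\, g^{ij}\xi_i\xi_j$ times the identity. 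Second, $\nabla\nrg = g^{ij}\langle \partial_t F,\partial_j\partial_t F\rangle F_i$ contains the mixed second derivatives $\partial_j\partial_t F = \partial_j V$, so it cannot be absorbed into a zeroth-order term $\widetilde\Phi(F,V,W)$ either; it contributes a non-symmetric first-order block $-g^{ij}\,W_i V^{T}\partial_j V$. The upshot is that the system for the ambient map $F$ is only degenerately hyperbolic (the wave operator acts only on the normal component, reflecting the reparametrization invariance of the underlying geometry), and the symmetrizer $\operatorname{diag}(1,\nrg\,g^{ij})$ you propose does not symmetrize the actual principal part. The claimed Lorentzian symbol and the appeal to the classical theorem for quasilinear symmetric systems therefore do not apply as stated; this is the hyperbolic analogue of the familiar fact that $\partial_t F = H\nu$ is only weakly parabolic and needs a gauge fixing (graph representation or a DeTurck-type trick).

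The paper resolves exactly this difficulty in two ways, and you need one of them. Either (i) cover $M$ by finitely many charts in which $M_t$ is a graph over a tangent plane, apply the scalar, genuinely hyperbolic graph equation of Theorem~\ref{P-graph} (whose symmetric conservative form in the variables $(\sigma,b=Du)$ with the convex entropy $E$ is established there), and patch the local solutions using finite speed of propagation; or (ii) write $F(t,x)=G(\xi(t,x))+u(t,\xi(t,x))\normal(\xi(t,x))$ over the fixed initial hypersurface $\Sigma=M_0$, which decouples the system into a single uniformly hyperbolic scalar second-order equation \eqref{hyp} for the height function $u$ (uniformly hyperbolic for $u$ small in $C^1$ by compactness of $\Sigma$) together with the ODE \eqref{g2b} for the diffeomorphisms $\xi$. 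Your use of Corollary~\ref{reduc} to guarantee that the flow remains normal is fine, but it does not repair the loss of strict hyperbolicity in the ambient formulation. Until you introduce a height-function (or equivalent gauge-fixing) decomposition, the energy estimate you anticipate cannot be closed.
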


We will provide two different arguments to handle the equation (HMCF').

Let us us cover the manifold with finitely many local charts, chosen in such a way that the
manifold can be viewed locally as a graph over its tangent plane at some point. In each local chart, we apply the local
existence theorem for graphs  established in Theorem~\ref{P-graph}. 
Indeed, due to the property of finite speed of propagation satisfied by hyperbolic equations, 
all of the arguments therein can be localized in space and apply in each coordinate patch.  
Then, by patching together these local solutions
and using the fact that only finitely many charts suffice to cover the manifold $M$, we can find a sufficiently 
small $T$ such that every local solution is defined within the time interval $[0,T)$, at least. 
This completes the proof of the theorem. 

The rest of this section is devoted to provide a second proof of Theorem~\ref{exist}
which is also of interest in its own sake. 
We will now express (HMCF') as a single scalar equation in terms of
a height function $u$ with respect to a fixed initial hypersurface.
To this end let us discuss the case of flows $F:[0,T) \times M\to\real{n+1}$
such that each $M_t:=F(t,M)$ is an entire graph over a
fixed reference manifold $\Sigma$ given by an immersion $G:M\to\Sigma\subset\real{n+1}$.
If each $M_t$ can be written as a graph over $\Sigma$, there must exist a 
family of smooth height functions $u: [0,T)\times M\to\real{}$ 
and a family $\xi:[0,T)\times M\to M$ of diffeomorphisms such that
\be \label{graph}
F(t,x)=G\bigl(\xi(t,x)\bigr)+u\bigl(t,\xi(t,x)\bigr)\normal
\bigl(\xi(t,x)\bigr),
\ee 
where
$\normal$ is the inward unit normal along $\Sigma$.
Let us denote the metric on $M=\Sigma$ by $\dd\sigma ij$ and the second
fundamental form by $\dd\tau ij$ The induced connection on $M$ with respect to
$\sigma$ will be denoted by $D$.

The tangent vectors take the form
\begin{eqnarray}
F_i(t,x)
&=&\Bigl(
G_j(\xi(t,x))+u_j(t,\xi(t,x))\normal(\xi(t,x))\nonumber\\
&&-u(t,\xi(t,x))\tau^k_j(\xi(t,x))G_k(\xi(t,x))\Bigr) \xi^j_i(t,x)\,\nonumber
\end{eqnarray}
where in this section a raised index will be raised with respect to the metric
$\sigma$, i.e. $\tau^k_i=\uu\sigma kl\dd\tau il$. It will be convenient
to define the following tensor
$$\dd Nij:=\dd\sigma ij-u\dd\tau ij.$$
Then the tangent vectors can be written in the form
\be \label{graph tangent}
F_i=\left(u_j\normal+N_j^lG_l\right)\xi^j_i
\ee 
and for the second derivative $F_{ij}:=\frac{\partial^2 F}{\partial x^i\partial x^j}$ we get
\begin{eqnarray}
F_{ij}
=\left( 
\left(u_{kl}+N_k^m\tau_{ml}\right)\normal+\left(D_lN_k^m-u_k\tau^m_l\right)G_m\right)
\xi^k_i\xi^l_j
+\left( u_k\normal+N_k^mG_m\right) \xi^k_{ij}.\nonumber
\end{eqnarray}
The induced metric tensor $\dd gij=\langle F_i,F_j\rangle$ is
\be \label{graph metric}
\dd gij=\left(u_ku_l+\dd Nkm N^m_l\right)\xi^k_i\xi^l_j.
\ee 
In the following we will assume that $u$ is sufficiently small, so
that the symmetric tensor $\dd Nij$ is invertible and we denote its inverse by
$\uu{\widetilde N}ij$. Let us define
$$w:=\sqrt{1+\widetilde N^k_i\uu{\widetilde N}ilu_ku_l}.$$
The inward unit normal along $M_t$ is then determined by
\be \label{graph normal}
\nu=\frac{1}{w}\left(\normal-\uu{\widetilde N}klu_kG_l\right), 
\ee 
so that
\begin{eqnarray}
\dd hij
&=&\left\langle F_{ij},\nu\right\rangle\nonumber\\
&=&\frac{1}{w}
\left(
u_{kl}+N_k^m\tau_{ml}+\tilde N^{rm}u_r\left(u_k\tau_{ml}-D_lN_{km}\right)\right) 
\xi^k_i\xi^l_j.
\end{eqnarray}

We need expressions for $\dt F$ and $\ddt F$.
From (\ref{graph}) we obtain
\begin{eqnarray}
\dt F
&=&G_k\dtx k+\left(u_t+u_k\dtx k\right)\normal-u\tau_k^l\dtx kG_l,\nonumber
\end{eqnarray}
where the subscript $t$ in $u_t$ denotes a partial derivative with respect to $t$, i.e.
$u_t=\frac{\partial u}{\partial t}$. Rearranging terms gives
\be \label{graph vel}
\dt F=\left(u_t+u_k\dtx k\right)\normal+N_k^l\dtx kG_l.
\ee 
This implies the relations
\be \label{graph v}
\vel=\left\langle\dt F,\nu\right\rangle=\frac{u_t}{w}
\ee 
and
\be \label{graph V}
\tvel_i=\left\langle\dt F,F_i\right\rangle=\left(u_ju_t+\dd {\tilde g}jk\dtx k\right)\xi^j_i, 
\ee 
where
$$
\tilde g_{kl}:=u_ku_l+N_l^mN_{ml}.
$$
We differentiate (\ref{graph vel}) with respect to time and compute
\begin{eqnarray}
\ddt F
&=&\left(\dd utt+2\dd utk\dtx k+u_k\frac{{d}^2\xi^k}{{d}t^2}\right)
\normal-\left(u_t+u_k\dtx k\right)\tau_i^l\dtx iG_l\nonumber\\
&&+D_iN_k^l\dtx i\dtx kG_l+N^l_k\frac{{d}^2\xi^k}{{d}t^2}G_l
+N^l_k\dd\tau il\dtx k\dtx i\,\normal.\nonumber
\end{eqnarray}
Hence, we have 
\begin{eqnarray}
\ddt F
&=&\left(\dd utt+2\dd utk\dtx k+u_k\frac{{d}^2\xi^k}{{d}t^2}
+N^l_k\dd\tau il\dtx k\dtx i\right)\normal\label{graph acc}\\
&&+\left(N^l_k\frac{{d}^2\xi^k}{{d}t^2}+D_iN_k^l\dtx i\dtx k
-\left(u_t+u_k\dtx k\right)\tau_i^l\dtx i\right)G_l 
\nonumber
\end{eqnarray}
and, consequently, 
\begin{eqnarray}
\acc&=&\left\langle\ddt F,\nu\right\rangle\nonumber\\
&=&\frac{1}{w}
\left(\dd utt+2\dd utk\dtx k+u_k\frac{{d}^2\xi^k}{{d}t^2}
+N^l_k\dd\tau il\dtx k\dtx i\right)\nonumber\\
&&-\frac{1}{w}\left(N^l_k\frac{{d}^2\xi^k}{{d}t^2}
+D_iN_k^l\dtx i\dtx k-\left(u_t+u_k\dtx k\right)\tau_i^l\dtx i\right)
\widetilde N^m_lu_m\nonumber
\end{eqnarray}
Therefore, we have 
\begin{eqnarray}
\acc&=&\frac{1}{w}\left(
\dd utt+2\dd utk\dtx k
+N^l_k\dd\tau il\dtx k\dtx i\right.\nonumber\\
&&\left.-D_iN_k^l\widetilde N^m_lu_m\dtx i\dtx k+\left(u_t+u_k\dtx k\right)
\tau_i^l\widetilde N^m_lu_m\dtx i\right)
\label{graph a}
\end{eqnarray}
and, moreover, 
\begin{eqnarray}
\tacc_i&=&\left\langle\ddt F,F_i\right\rangle\nonumber\\
&=&u_j\left(\dd utt+2\dd utk\dtx k+u_k\frac{{d}^2\xi^k}{{d}t^2}
+N^l_k\dd\tau sl\dtx k\dtx s\right)\xi^j_i\nonumber\\
&&+\dd Njl\left(N^l_k\frac{{d}^2\xi^k}{{d}t^2}+D_sN_k^l\dtx s\dtx k
-\left(u_t+u_k\dtx k\right)\tau_s^l\dtx s\right)\xi^j_i.\nonumber
\end{eqnarray}
Reordering gives the final formula 
\begin{eqnarray}
\tacc_i
&=&\left(
u_j\dd utt+2u_j\dd utk\dtx k+\dd {\tilde g}jk
\frac{{d}^2\xi^k}{{d}t^2}
+(u_jN^l_k\dd\tau sl+\dd NjlD_sN_k^l)\dtx s\dtx k\right.\nonumber\\
&&\left.-\dd Njl\tau^l_s\left(u_t+u_k\dtx k\right)\dtx s\right) \xi^j_i.
\label{graph A}
\end{eqnarray}

We summarize our results in the following proposition.

\begin{proposition}\label{prop summ}
If $F:[0,T)\times M\to\real{n+1}$ is an arbitrary flow, where each $M_t=F(t,M)$ is
represented as a graph over $G:M\to\Sigma$ as above, then $(u,(\xi^k)_{k=1,\dots,n})$ is
a solution of the coupled system
\begin{eqnarray}
\vel&=&\frac{u_t}{w}\label{g1}\\
\tvel_i&=&\left(u_ju_t+\dd {\tilde g}jk\dtx k\right)\xi^j_i\label{g2}\\
\acc&=&\frac{1}{w}\left(
\dd utt+2\dd utk\dtx k
+N^l_k\dd\tau il\dtx k\dtx i\right.\nonumber\\
&&\left.-D_iN_k^l\widetilde N^m_lu_m\dtx i\dtx k+\left(u_t+u_k\dtx k\right)
\tau_i^l\widetilde N^m_lu_m\dtx i\right)
\label{g3}\\
\tacc_i
&=&\left(
u_j\dd utt+2u_j\dd utk\dtx k+\dd {\tilde g}jk
\frac{{d}^2\xi^k}{{d}t^2}
+(u_jN^l_k\dd\tau sl+\dd NjlD_sN_k^l)\dtx s\dtx k\right.\nonumber\\
&&\left.-\dd Njl\tau^l_s\left(u_t+u_k\dtx k\right)\dtx s\right) \xi^j_i.
\label{g4}
\end{eqnarray}
\end{proposition}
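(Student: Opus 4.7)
The plan is to observe that the four identities \eqref{g1}--\eqref{g4} are precisely the formulas \eqref{graph v}, \eqref{graph V}, \eqref{graph a}, \eqref{graph A} derived in the preceding computation, so the proof amounts to organizing those calculations. Everything is driven by the graph ansatz \eqref{graph} and the Gauss--Weingarten relations for the fixed hypersurface $\Sigma$, namely $D_j G_i = \tau_{ij}\,\normal$ and $D_j \normal = -\tau_j^k G_k$, which allow us to convert spatial and time derivatives of $\normal$ and $G$ into tangential or normal contributions.

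First I would differentiate \eqref{graph} in space, being careful to differentiate through $\xi(t,x)$ by the chain rule, and group the $\normal$-component (which produces $u_j$) with the $G_l$-component (which produces $\sigma_j^l - u\tau_j^l = N_j^l$); this yields \eqref{graph tangent}. The tensor $N_{ij} = \sigma_{ij} - u\tau_{ij}$ is symmetric and, for $u$ small, invertible, so $\widetilde N^{ij}$ and $w=\sqrt{1+\widetilde N^k_i \widetilde N^{il} u_k u_l}$ are well-defined. Next I would guess the inward unit normal in the form $\nu = w^{-1}(\normal - \widetilde N^{kl} u_k G_l)$ and verify both $\langle\nu,F_i\rangle = 0$ (which reduces to $u_j - \widetilde N^{kl}u_k N_{jl} = 0$) and $|\nu|=1$, giving \eqref{graph normal}.

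Second, I would differentiate \eqref{graph} once in time, again using the chain rule through $\xi$, to produce \eqref{graph vel}. Projecting onto $\nu$ and onto $F_i$ with the help of \eqref{graph tangent} and \eqref{graph normal} and performing the algebra (the cross-terms $u_j u_t$ and $\widetilde g_{jk}\dtx k$ appear naturally after regrouping) yields the formulas \eqref{g1} and \eqref{g2} for $\vel$ and $\tvel_i$.

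Third, I would differentiate \eqref{graph vel} once more in time, obtaining \eqref{graph acc}; this is the longest step, since one must track the time derivatives of $\normal$ and $G_l$ through $\xi$ (producing the $\tau$-terms) as well as the second time derivatives of $u$ and of the diffeomorphism $\xi^k$. Decomposing the resulting vector along $\nu$ and along $F_i$ gives $\acc$ and $\tacc_i$; substituting \eqref{graph normal} and \eqref{graph tangent} and collecting terms yields \eqref{g3} and \eqref{g4}. The main obstacle is purely book-keeping: keeping the distinction between indices raised by $\sigma$ (relevant for $\tau^k_i$, $N^l_k$, $\widetilde N^{ij}$) and those raised later by the induced metric $g$, and correctly distributing the chain-rule factors $\xi^j_i$ on tangential quantities. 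No new geometric input is needed beyond what was used to derive \eqref{graph v}--\eqref{graph A}.
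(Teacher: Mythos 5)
Your proposal is correct and follows essentially the same route as the paper: the proposition is stated there as a summary of the immediately preceding computation, which proceeds exactly as you describe — differentiate the graph ansatz \eqref{graph} in space and time through $\xi$, use the Gauss--Weingarten relations of $\Sigma$ to obtain \eqref{graph tangent} and \eqref{graph normal}, and then project $\frac{d}{dt}F$ and $\frac{d^2}{dt^2}F$ onto $\nu$ and $F_i$ to read off \eqref{g1}--\eqref{g4}. Your verification that $\langle\nu,F_i\rangle=0$ reduces to $u_j-\widetilde N^{kl}u_kN_{jl}=0$ (an identity since $\widetilde N$ inverts $N$) is exactly the check implicit in the paper.
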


Note, that up to this point we have not chosen a particular flow. This will be done in the
next step. For (HMCF'), we have $\tvel_i=0$, $\alpha=\nrg H$, $\tacc=-\nabla_i\nrg $.
From (\ref{g2}) we obtain
\begin{equation}\label{g2b}
\dtx k=-\uu{\tilde g}jku_ju_t, 
\end{equation}
where $\uu{\tilde g}jk$ denotes the inverse of $\dd{\tilde g}jk$.
Inserting this into (\ref{g3}) we get
\begin{eqnarray}
\nrg H&=&\frac{1}{w}\left\{\dd utt-2\uu{\tilde g}jku_ju_t\dd utk
+L\right\}, \label{hyp}
\end{eqnarray}
where $L$ is a term of first order in $u$. This, by the compactness of $\Sigma$,
clearly is a uniformly hyperbolic equation, provided
$u$ is sufficiently small in $C^1(\Sigma)$. So we can provide a short-time solution for (HMCF'),
if we assume $u_{|t=0}=0$, i.e. if $\Sigma=M_0.$ After having solved (\ref{hyp}) we can
solve (\ref{g2b}) for $\dtx k$. If $u$ is sufficiently small in $C^1(\Sigma)$, then
$\dtx k$ will be small as well so that by choosing $\xi_{t=0}=\operatorname{Id}_{\Sigma}$
we obtain a family of diffeomorphisms $\xi(t)$ solving (\ref{g4}).
This completes the proof of Theorem~\ref{exist}.

%===============================================================================================

\section{Finite-time blow-up results}
\label{S-blow-up} 

\subsection*{An example of finite-time blow-up} 

The hyperbolic mean curvature flow  may blow-up in finite time in a way that 
is completely analogous to the standard mean curvature flow. We provide here a typical example.

Let $F_0:S^n\to\real{n+1}$ be a round sphere of radius $r_0$. If
$F:[0,T)\times S^n\to\real{n+1}$ is a solution of (HMCF') with
initial data $F(0,x)=F_0(x)$ and $\dt F(0,x)=\vel_0\nu(x)$ with a constant 
$\vel_0\in\real{}$, then $M_t:=F(t,S^n)$ is a concentric sphere with radius 
$r(t)$. In this case, the \flow\ reduces to the ordinary differential equation (ODE) 
\begin{eqnarray}
r\ddot r+\frac{n}{2}(\dot r)^2+\frac{n^2}{2}&=&0,\nonumber\\
r(0)&=&r_0,\nonumber\\
\dot r(0)&=&-\vel_0.\nonumber
\end{eqnarray}
This second-order equation can be reduced to the following first-order ODE
\begin{eqnarray}
\dot r&=&\begin{cases}
\disp\phantom{-\,}
\sqrt{(n+\vel_0^2)\left(\frac{r_0}{r}\right)^n-n},&\text{ if }
\vel_0<0,\\ \\
\disp-\,\sqrt{(n+\vel_0^2)\left(\frac{r_0}{r}\right)^n-n},&\text{ if }
\vel_0\ge0,
\end{cases}\label{ode}\\
r(0)&=&r_0.\nonumber
\end{eqnarray} 

The solution depends upon the dimension $n$. In the case $n=1$, 
we obtain the cycloid
\begin{eqnarray}
&&r\sqrt{\frac{c}{r}-1}+c
\arctan{\sqrt{\frac{c}{r}-1}}
\nonumber\\
&=&\begin{cases}
\disp-t-r_0\vel_0-c\arctan{\vel_0},
&\text{ if }
\vel_0<0,\\ \\
\disp t+r_0\vel_0+c\arctan{\vel_0},
&\text{ if }
\vel_0\ge0,
\end{cases}\nonumber
\end{eqnarray}
where $c=r_0(1+\vel_0^2)$.

On the other hand, in the case $n=2$, we obtain the explicit solution
\be \nonumber
r(t)=\sqrt{r_0^2-2r_0\vel_0t-2t^2}.
\ee 
If $\vel_0<0$, the sphere begins to expand until it starts to shrink
and eventually collapses to a point 
in a finite time $T$, given by   
$$
T=\frac{r_0}{2}\left(-\vel_0+\sqrt{\vel_0^2+2}\right).
$$

In the above situation, one can avoid the formation of singularities by rescaling the metric
according to its volume. 

%--------------------------------------------------------------------------------------------

\subsection*{Blow-up estimates based on the mean and total mean curvature} 

In some situations it is possible to derive blow-up results from the behavior of
the mean or total mean curvature of the system. To this end let us define the function
$$
\gamma(\sigma):=\frac{2}{\sqrt{n}}\arctan\left(\frac{\vel}{\sqrt{n}}\right),
$$
which satisfies 
$$
\gamma'=\frac{2}{n}\frac{1}{\frac{\vel^2}{n}+1}=\frac{1}{\nrg}
$$
and
$$
\dt\gamma=\gamma'\dt\vel=H.
$$
Given $U\subset M$ we define
$$
f_U(t):=\int\limits_U (\vel+\gamma\nrg)\,d\mu_t, 
\qquad 
E_U:=\int\limits_{U}e d\mu_t.
$$
Note that $\dt(\nrg d\mu_t)=0$ implies, that $E_U$ does not depend on $t$.

\begin{proposition}\label{prop bu1}
For any integrable $U\subset M$ and any $0\le t_1\le t_2\le T$ one has 
\begin{equation}\label{bu1}
\left|\int\limits_{t_1}^{t_2}\int\limits_UHd\mu_tdt\right|=\frac{1}{n}|f_U(t_2)-f_U(t_1)|
\le\frac{2\pi}{n\sqrt{n}}\,E_U
\end{equation}
and
\begin{eqnarray}
\left|\int\limits_{t_1}^{t_2}\int\limits_UH\nrg\, d\mu_tdt\right|
&=&\left|f_U(t_2)-f_U(t_1)+\int\limits_U\vel d\mu_{t_1}-\int\limits_U\vel d\mu_{t_2}\right|\nonumber\\
&\le&\frac{2(\pi+1)}{\sqrt{n}}\,E_U\label{bu2}.
\end{eqnarray}
In particular, if $T=\infty$, then for any $\epsilon>0$ and any choice of integrable
$U\subset M$ there exists a 
sequence $t_k\to \infty$ such that
$$\left|\int\limits_UHd\mu_{t_k}\right|<\epsilon, 
\qquad k\in\natural{}$$
and
$$\left|\int\limits_UH\nrg\,d\mu_{t_k}\right|<\epsilon, \qquad k\in\natural{}.
$$
\end{proposition}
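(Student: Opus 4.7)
The plan is to obtain both equalities by differentiating $f_U$ and $\int_U \sigma\, d\mu_t$ in time using the three ingredients already isolated in the paper, then to extract the $E_U$-bounds via sharp pointwise estimates of $|\sigma|$ and $|\sigma + \gamma e|$ against $e$, and finally to deduce the sequence claim by a continuity/contradiction argument.

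First I would collect the key identities. Since we are in the normal regime (Corollary~\ref{reduc}), we have $\dot\sigma = e H$, as noted in Section~\ref{S-HMCF}, and hence $\dot\gamma = \gamma'(\sigma)\,\dot\sigma = H$. Proposition~\ref{energyid} gives $\partial_t(e\, d\mu_t) = 0$, so in particular $E_U$ is conserved. Combined with the volume evolution \eqref{evol 377}, $\partial_t d\mu_t = -\sigma H\, d\mu_t$, I would compute
\begin{align*}
\partial_t(\sigma\, d\mu_t) &= (\dot\sigma - \sigma^2 H)\, d\mu_t = \tfrac{1}{2}(n - \sigma^2)\, H\, d\mu_t, \\
\partial_t(\gamma e\, d\mu_t) &= \dot\gamma \cdot e\, d\mu_t = e H\, d\mu_t.
\end{align*}
Adding these and using $e + \tfrac{1}{2}(n - \sigma^2) = n$ yields $\partial_t f_U(t) = n \int_U H\, d\mu_t$, and integrating in $t$ gives the equality in \eqref{bu1}. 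The equality in \eqref{bu2} then follows directly from integrating $\partial_t(\gamma e\, d\mu_t) = eH\, d\mu_t$ over $U\times[t_1,t_2]$ and substituting $\gamma e = (\sigma + \gamma e) - \sigma$ in the boundary terms.

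For the $E_U$-bounds I would estimate the two ratios $|\sigma|/e$ and $|\sigma + \gamma e|/e$ pointwise. The inequality $2e = \sigma^2 + n \ge 2\sqrt n\,|\sigma|$ immediately gives $|\sigma| \le e/\sqrt n$. For the other, setting $x := \sigma/\sqrt n$ I would write
\[
\frac{\sigma + \gamma e}{e} = \frac{2}{\sqrt n}\,\varphi(x), \qquad \varphi(x) := \frac{x}{x^2+1} + \arctan x,
\]
and observe $\varphi'(x) = 2/(x^2+1)^2 > 0$, so $\varphi$ is strictly increasing with limits $\pm\pi/2$ at $\pm\infty$; hence $|\varphi| \le \pi/2$ and $|\sigma + \gamma e| \le (\pi/\sqrt n)\, e$. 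Thus $|f_U(t)| \le (\pi/\sqrt n)\, E_U$, which after the triangle inequality yields the constant $2\pi/(n\sqrt n)$ in \eqref{bu1}, and combining with $\bigl|\int_U \sigma\, d\mu_t\bigr| \le E_U/\sqrt n$ yields the constant $2(\pi+1)/\sqrt n$ in \eqref{bu2}.

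The one slightly delicate point is the sharp bound $|\varphi| \le \pi/2$: without the monotonicity computation one would naively estimate the two summands $\tfrac{x}{x^2+1}$ and $\arctan x$ separately by $1/2$ and $\pi/2$, picking up an extra additive $E_U/\sqrt n$ and missing the constant $2\pi/(n\sqrt n)$ in \eqref{bu1}. Finally, for $T = \infty$, I would argue by contradiction: if there were $\epsilon > 0$ and $T_0$ with $\bigl|\int_U H\, d\mu_t\bigr| \ge \epsilon$ for all $t \ge T_0$, then continuity in $t$ forces a fixed sign on $[T_0,\infty)$, so $\bigl|\int_{T_0}^{T_0+N}\!\!\int_U H\, d\mu_t\, dt\bigr| \ge N\epsilon \to \infty$, contradicting the uniform bound \eqref{bu1}. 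The same argument using \eqref{bu2} gives the sequence for $\int_U H e\, d\mu_t$.
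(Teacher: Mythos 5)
Your proposal is correct and follows essentially the same route as the paper: the same function $\gamma$ and quantity $f_U$, the identity $\frac{d}{dt}f_U=n\int_U H\,d\mu_t$, the pointwise bound $\bigl|\frac{\sigma}{e}+\gamma\bigr|<\pi/\sqrt n$ via monotonicity (which you verify explicitly, whereas the paper only asserts it), and the bound $\bigl|\int_U\sigma\,d\mu_t\bigr|\le E_U/\sqrt n$ (obtained pointwise via AM--GM rather than the paper's integrated $(\sigma\pm\sqrt n)^2\ge0$ trick, but equivalently). Your derivation of the equality in \eqref{bu2} by integrating $\partial_t(\gamma e\,d\mu_t)=eH\,d\mu_t$ directly is a minor, harmless shortcut compared to the paper's separate computation of $\frac{d}{dt}\int_U\sigma\,d\mu_t$.
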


\begin{proof}
Since $\dt\vel=\nrg H$ and $\dt\, d\mu_t=-\vel Hd\mu_t$ we compute
$$\dt f_U(t)=\int\limits_U(\nrg H+H\nrg+\gamma\vel\nrg H-(\vel+\gamma\nrg)\vel H) d\mu_t=n\int\limits_UHd\mu_t.
$$
The function
$\frac{\vel}{\nrg}+\gamma$ is a monotone increasing function in $\vel$ and 
$$-\frac{\pi}{\sqrt{n}}<\frac{\vel}{\nrg}+\gamma<\frac{\pi}{\sqrt{n}}.
$$
Therefore the function 
$$f_U(t)=\int\limits_U\left(\frac{\vel}{\nrg}+\gamma\right)\nrg d\mu_t$$
satisfies
\begin{equation}\label{bu3}
-\frac{\pi}{\sqrt{n}}\,E_U<f_U(t)<\frac{\pi}{\sqrt{n}}\,E_U.
\end{equation}
Then we obtain 
$$\dt f_U(t)=n\int\limits_U Hd\mu_t$$
which implies (\ref{bu1}).
From the observation 
$$0\le\int_U(\vel\pm\sqrt{n})^2d\mu_t=\int\limits_U(\vel^2+n\pm 2\sqrt{n}\vel)d\mu_t=2E_U
\pm 2\sqrt{n}\int\limits_U\vel d\mu_t$$
we conclude that 
\begin{equation}\label{bu4}
\left|\int\limits_U\vel d\mu_t\right|\le \frac{E_U}{\sqrt{n}}.
\end{equation}
Moreover, we have 
$$\dt\int\limits_U\vel\,d\mu_t=\int\limits_U(\nrg-\vel^2)Hd\mu_t
=-\int\limits_U\nrg Hd\mu_t+n\int\limits_UHd\mu_t,
$$
and thus 
$$
\dt\left(f_U(t)-\int\limits_U\vel\,d\mu_t\right)=\int\limits_U\nrg Hd\mu_t.$$
This and (\ref{bu3}), (\ref{bu4}) imply (\ref{bu2}).
\end{proof}

We can do even better:

\begin{proposition}
For any $x\in M$ and any $0\le t_1\le t_2\le T$ one has 
$$ 
\left|\int\limits_{t_1}^{t_2}H(x,t)dt\right|
=|\gamma(t_2)-\gamma(t_1)|\le\frac{2\pi}{\sqrt{n}}.
$$ 
In particular, if $T=\infty$, then for any $\epsilon>0$ and any $x\in M$ there exists a 
sequence $t_k\to \infty$ such that
$$|H(x,t_k)|<\epsilon, \qquad  k\in\natural{}.$$
\end{proposition}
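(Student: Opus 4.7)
The plan is to prove this pointwise analogue of Proposition~\ref{prop bu1} by exploiting that $\gamma$, evaluated along the flow at a fixed point $x$, is a primitive of $H$ in the time variable, and is \emph{a priori} uniformly bounded by the range of $\arctan$.

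First, I would fix $x \in M$ and view $\sigma(t) := \sigma(t,x)$ and hence $\gamma(t) := \gamma(\sigma(t,x))$ as functions of $t$ alone. By the chain rule and the definition of $\gamma$, together with the already-recorded identities $\gamma'(\sigma) = 1/e$ and (for normal flows) $\tfrac{d\sigma}{dt} = e \, H$, I obtain
\[
\dt \gamma = \gamma'(\sigma)\,\dt\sigma = \tfrac{1}{e}\cdot e\,H = H(x,t).
\]
Integrating from $t_1$ to $t_2$ immediately yields $\int_{t_1}^{t_2} H(x,t)\,dt = \gamma(t_2)-\gamma(t_1)$. Since $\arctan$ takes values in $(-\pi/2,\pi/2)$, the definition $\gamma(\sigma) = \tfrac{2}{\sqrt{n}}\arctan(\sigma/\sqrt{n})$ gives the uniform pointwise bound $|\gamma(t)| < \pi/\sqrt{n}$, and therefore $|\gamma(t_2)-\gamma(t_1)| \le 2\pi/\sqrt{n}$, which is the claimed inequality.

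For the second assertion (the case $T = \infty$), I would argue by contradiction: suppose there exist $\epsilon > 0$, $x \in M$, and $T_0 \ge 0$ with $|H(x,t)| \ge \epsilon$ for all $t \ge T_0$. Since the solution is smooth and hence $t \mapsto H(x,t)$ is continuous, $H(x,\cdot)$ cannot change sign on $[T_0,\infty)$ (any sign change would produce a zero, contradicting $|H| \ge \epsilon$). Thus $H(x,\cdot)$ has a constant sign on $[T_0,\infty)$, and so for any $T > T_0$,
\[
\left|\int_{T_0}^{T} H(x,t)\,dt\right| = \int_{T_0}^{T} |H(x,t)|\,dt \ge \epsilon\,(T - T_0),
\]
which tends to $+\infty$ as $T \to \infty$. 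This contradicts the uniform bound $2\pi/\sqrt{n}$ obtained above, and hence there must exist $t_k \to \infty$ with $|H(x,t_k)| < \epsilon$.

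The computation of $\dt\gamma = H$ is essentially free given the identities already established in the paper for normal flows, so there is no real obstacle; the content of the statement lies in the happy fact that $\arctan$ is bounded, which then turns the trivial integration into a finite uniform bound. The only slightly subtle point is the continuity/sign argument for the corollary, but this is routine given the smoothness of the flow.
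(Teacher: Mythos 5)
Your proof is correct and follows essentially the same route as the paper: the identity $\dt\gamma = \gamma'(\sigma)\,\dt\sigma = \tfrac{1}{e}\cdot e H = H$ is integrated in $t$ at fixed $x$, and the uniform bound $|\gamma|\le \pi/\sqrt{n}$ coming from the range of $\arctan$ gives the estimate. The paper leaves the ``in particular'' clause implicit, whereas you supply the routine continuity/sign argument explicitly, which is a harmless (and welcome) addition.
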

\begin{proof}
This follows directly by integrating
$$\dt\gamma=H$$
and from
\begin{equation}\label{bu6}
|\gamma|\le\frac{\pi}{\sqrt{n}}.
\end{equation}
\end{proof}

\begin{proposition}
Consider the flow associated with a closed curve $C\subset\real{2}$ with non-vanishing rotation number $\chi(C)$.
Then $T<\infty$.
\end{proposition}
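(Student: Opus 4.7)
The plan is to combine the topological rigidity of the rotation number with the integral bound on total mean curvature given by Proposition~\ref{prop bu1}. Since we are in the curve case $n=1$, the scalar mean curvature $H$ is exactly the signed geodesic curvature $\kappa$ of the evolving plane curve $C_t := F(t,C)$, and the Whitney–Graustein/Hopf Umlaufsatz identity reads
\begin{equation*}
\int_{C_t} H \, d\mu_t = 2\pi \, \chi(C_t).
\end{equation*}
The rotation number $\chi(C_t)$ is a topological invariant of the smooth regular immersion $F(t,\cdot): C \to \real{2}$, so it is constant in $t$ as long as the flow exists classically; in particular $\chi(C_t) = \chi(C) \neq 0$ for all $t \in [0,T)$.

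Second, I would take $U = M = C$ in the bound \eqref{bu1} from Proposition~\ref{prop bu1}. Specialised to $n=1$ this gives, for every $0 \le t_1 \le t_2 < T$,
\begin{equation*}
\left| \int_{t_1}^{t_2} \int_{C} H \, d\mu_t \, dt \right| \le 2\pi \, E_C,
\end{equation*}
where $E_C = \int_C \nrg \, d\mu_t$. By the conservation law \eqref{slnrg} from Proposition~\ref{energyid}, which guarantees that $\nrg \, d\mu_t$ is a fixed measure along a normal flow, $E_C$ is a finite constant determined by the initial data.

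Substituting the topological identity into the integral inequality gives
\begin{equation*}
2\pi \, |\chi(C)| \, (t_2 - t_1) = \left| \int_{t_1}^{t_2} 2\pi \, \chi(C) \, dt \right| \le 2\pi \, E_C,
\end{equation*}
hence
\begin{equation*}
t_2 - t_1 \le \frac{E_C}{|\chi(C)|}
\end{equation*}
for every pair $0 \le t_1 \le t_2 < T$. Taking $t_1 = 0$ and letting $t_2 \to T$ forces
\begin{equation*}
T \le \frac{E_C}{|\chi(C)|} < \infty,
\end{equation*}
which is the desired blow-up statement.

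The only delicate point, and therefore the main obstacle to articulate carefully, is the justification that the rotation number really is preserved on the maximal interval of classical existence. This is standard: as long as $F(t,\cdot)$ is a $C^1$ immersion of $S^1$ into $\real{2}$, the Gauss map $S^1 \to S^1$ depends continuously on $t$, so its degree $\chi(C_t)$ is locally constant in $t$ and therefore constant on $[0,T)$. Any breakdown of this picture (loss of regularity, loss of immersion) is itself a finite-time singularity, so the conclusion $T<\infty$ is unaffected.
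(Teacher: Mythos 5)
Your proof is correct and follows the same route as the paper: express the rotation number as $\frac{1}{2\pi}\int_C H\,d\mu_t$, use its topological invariance to make $\int_C H\,d\mu_t$ a nonzero constant, and then invoke the uniform bound \eqref{bu1} of Proposition~\ref{prop bu1} to force $T<\infty$. You merely spell out the explicit bound $T\le E_C/|\chi(C)|$, which the paper leaves implicit.
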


\begin{proof}
The rotation number of a curve is given by
$$\chi(C)=\frac{1}{2\pi}\int\limits_C H \, d\mu.
$$
This is a topological invariant, hence in particular $\dt\chi(C_t)=0$ for all smooth
deformations $C_t$ of $C$. It follows that
$$
\int\limits_C H \, d\mu_t
$$ 
is a (non-zero) constant. Hence, by Proposition \ref{prop bu1} we must have $T<\infty$.
\end{proof}

%====================================================================================================================

\bibliographystyle{amsplain}

\end{document}